\providecommand{\U}[1]{\protect\rule{.1in}{.1in}}
\newtheorem{theorem}{Theorem}
\newtheorem{corollary}[theorem]{Corollary}
\newtheorem{example}[theorem]{Example}
\newtheorem{lemma}[theorem]{Lemma}
\newtheorem{proposition}[theorem]{Proposition}
\newenvironment{proof}[1][Proof]{\noindent\textbf{#1.} }{\ \rule{0.5em}{0.5em}}
\begin{document}

\title{Order denseness in free Banach lattices}
\author{Youssef Azouzi and Wassim Dhifaoui\\{\small Research Laboratory of Algebra, Topology, Arithmetic, and Order}\\{\small Department of Mathematics}\\{\small Faculty of Mathematical, Physical and Natural Sciences of Tunis}\\{\small Tunis-El Manar University, 2092-El Manar, Tunisia}}
\date{}
\maketitle

\begin{abstract}
We prove a fundamental property: the free vector lattice $\operatorname*{FVL}%
\left[  E\right]  $ over a Banach space $E$ is order dense in the free
$p$-convex Banach lattice $\operatorname*{FBL}\nolimits^{(p)}\left[  E\right]
,$ $1\leq p\leq\infty,$ if and only if $E$ is finite-dimensional. In a recent
work, Oikhberg, Tradacete, Taylor, and Troitsky \cite[Theorem 2.8]{L-925}
claimed that order denseness holds for all Banach spaces. We point out a gap
in their proof, and consequently, any conclusions relying on this claim
require reexamination---a task we also undertake in this present paper. A key
tool in our approach, which also leads to a partial answer to an open question
recently posed by these authors.

\end{abstract}

\textbf{Key words: }Banach lattice, free Banach lattice, bounded weak*
topology, order denseness; lattice homomorphism.

\section{Introduction}

Over the past decade, a new and promising area in functional analysis has
emerged, linking Banach spaces with Banach lattices. This topic has attracted
significant attention from researchers. Two fundamental papers mark the
beginning of this line of study. The first paper \cite{L-332}, by de Pagter
and Wickstead, introduced the concept of free Banach lattices generated by a
set, denoted by $\operatorname*{FBL}\left(  A\right)  ,$ effectively
initiating the field. Three years later, a second major contribution
\cite{L-403} came from Avil\'{e}s, Rodr\'{\i}guez, and Tradacete, who
generalized the theory by introducing the notion of free Banach lattice
generated by a Banach space, denoted by $\operatorname*{FBL}\left[  E\right]
$. Notably, the free Banach lattice generated by a set $A$ corresponds to the
free Banach lattice generated by the Banach space $\ell_{1}\left(  A\right)
$, that is $\operatorname*{FBL}\left(  A\right)  =\operatorname*{FBL}\left[
\ell_{1}\left(  A\right)  \right]  .$ Since then, research activity in this
area has accelerated rapidly, with over thirty papers published within just a
few years. This growing body of work highlights the vitality and potential of
the subject. Our contribution is motivated by and builds upon the pioneering
paper \cite{L-925}, which appeared in the Journal of the European Mathematical
Society one year ago.

By definition, the free vector lattice $\operatorname*{FVL}\left[  E\right]  $
of a Banach space $E$ is norm dense in the free Banach lattice
$\operatorname*{FBL}\nolimits^{(p)}\left[  E\right]  $ generated by the same
space. The question of order denseness, however, is more subtle and was
discussed in \cite{L-925}, where it was shown that $\operatorname*{FVL}\left[
E\right]  $ is order dense in $\operatorname*{FBL}\left[  E\right]  $ for
every Banach space $E.$ But there was a gap in the proof and it turns out that
order denseness holds only when $E$ is finite-dimensional. The aim of this
note is to give a proof of the latter result and provide some consequences. In
particular, we will revisit some previously established results using order
density and show that, with one exception, they remain valid. One of our key
tools to attack the problem of denseness is the use of bounded weak* topology.
This topology is very suitable when handling free Banach lattices because,
contrary to weak* topology, elements in $\operatorname*{FBL}\nolimits^{(p)}%
\left[  E\right]  $ act continuously with respect to bounded weak topology on
$E^{\ast}.$ The paper is organized as follows. Section 2 introduces the
necessary notation and preliminaries. We begin by fixing some notation and
recalling basic facts about free Banach lattices that will be used throughout
the paper. We then give a brief overview of the bounded weak* topology, a key
tool in our work. In section 3 we present our results, starting with the main
theorem, which states that the free vector lattice $\operatorname*{FVL}\left[
E\right]  $ is order dense in the free Banach lattice $\operatorname*{FBL}%
\nolimits^{(p)}\left[  E\right]  $ if and only $E$ is finite-dimensional. We
show then an operator $T:F\longrightarrow E$ is injective if and only if the
natural lattice homomorphism $\overline{T}:\operatorname*{FBL}%
\nolimits^{(\infty)}\left[  F\right]  \longrightarrow\operatorname*{FBL}%
\nolimits^{(\infty)}\left[  E\right]  $ associated to $T$ is injective. Next,
we provide a partial answer to a question raised in \cite{L-925} by Oikhberg,
Taylor, Tradacete and Troitsky showing that if $T$ is with closed range then
$\overline{T}:\operatorname*{FBL}\nolimits^{(\infty)}\left[  F\right]
\longrightarrow\operatorname*{FBL}\nolimits^{(\infty)}\left[  E\right]  $ is
order continuous if and only if $T$ is injective. In fact, we prove that order
continuity alone of $\overline{T}$ implies injectivity of $T,$ without
assuming the closedness of the range. For notations and terminology on vector
lattices and Banach lattices not recalled here, the reader is referred to
\cite{b-240}.

\section{Preliminaries}

\textbf{Free Banach lattices. }We begin with a brief review and some essential
terminology related to free Banach lattices that will be used throughout the
paper. For foundational background, the reader is referred to the seminal
papers \cite{L-332,L-403}. All additional material necessary for reading this
work can be found in the memoire \cite{L-925}. Given a Banach space $E,$ the
free $p$-convex Banach lattice generated by $E$ is a Banach lattice $L$
satisfying some universal property. It was defined for $p=1$ in \cite{L-403}
and then extended in \cite{L-742} for the other values of $p.$ A complete
description of these spaces and their construction can be found in
\cite{L-925}. The space $\operatorname*{FBL}\nolimits^{(p)}\left[  E\right]  $
is a $p$-convex  Banach lattice $\operatorname*{FBL}\nolimits^{(p)}\left[
E\right]  ($\footnote{Note that a $1$-convex Banch lattice is just a Banach
lattice, while an $\infty$-convex Banach lattice is nothing other an
AM-space.}) together with a linear isometry $\delta_{E}:E\longrightarrow
\operatorname*{FBL}\nolimits^{(p)}\left[  E\right]  $ such that for every
$p$-convex Banach lattice $X$ and every bounded operator $T:E\longrightarrow
X$ there exists a unique lattice homomorphism $\widehat{T}:\operatorname*{FBL}%
\nolimits^{(p)}\left[  E\right]  \longrightarrow X$ such that $T=\widehat
{T}\circ\delta$ and $\left\Vert \widehat{T}\right\Vert \leq M_{p}\left(
X\right)  \left\Vert T\right\Vert ,$ where $M_{p}\left(  X\right)  $ is the
$p$-convexity constant of $X.$ For a concrete construction of
$\operatorname*{FBL}\nolimits^{(p)}\left[  E\right]  $ we refer the reader to
\cite[Theorem 2.1 and Proposition 2.2]{L-925}. Having an explicit description
of these spaces can be useful in proving certain results, though in this work
we rely only a few of their basic properties. We simply emphasize that
$\operatorname*{FBL}\nolimits^{(p)}\left[  E\right]  $ is a sublattice of
$H[E],$ the linear subspace of $\mathbb{R}^{E^{\ast}}$ consisting of all
positively homogeneous functions. It is in fact the closed sublattice
generated by $\delta_{E}\left(  E\right)  $ in the Banach space $H^{p}\left[
E\right]  =\left\{  f\in H\left[  E\right]  :\left\Vert f\right\Vert
_{\operatorname*{FBL}\nolimits^{(p)}\left[  E\right]  }<\infty\right\}  ,$
where the norm is defined as follow.%
\[
\left\Vert f\right\Vert _{\operatorname*{FBL}\nolimits^{(p)}\left[  E\right]
}=\sup\left(
{\textstyle\sum\nolimits_{k}}
\left\vert f\left(  x_{k}^{\ast}\right)  \right\vert ^{p}\right)  ^{1/p}.
\]
Here the supremum is taken over all finite sequences $\left(  x_{k}^{\ast
}\right)  $ in $E^{\ast}$ satisfying
\[
\sum\limits_{k}\left\vert x_{k}^{\ast}\left(  x\right)  \right\vert ^{p}%
\leq1\text{ for all }x\in B_{E}.
\]

The map $\delta_{E}:E\longrightarrow H\left[  E\right]  $ enabling to identify
$E$ with a sublattice of $\operatorname*{FBL}\nolimits^{(p)}\left[  E\right]
$ is given by $\delta_{E}\left(  x\right)  \left(  x^{\ast}\right)  =x^{\ast
}\left(  x\right)  $ for $x\in E$ and $x^{\ast}\in E^{\ast}.$

The case $p=\infty$ is more interesting for us. In this case $\left\Vert
f\right\Vert _{\operatorname*{FBL}\nolimits^{(\infty)}\left[  E\right]  }%
=\sup\limits_{x^{\ast}\in B_{E^{\ast}}}\left\vert f\left(  x^{\ast}\right)
\right\vert $ and $\operatorname*{FBL}\nolimits^{(\infty)}\left[  E\right]  $
is again the close sublattice of $H\left[  E\right]  $ generated by
$\delta\left(  E\right)  .$ It was shown in \cite[Proposition 2.2]{L-925} that
$\operatorname*{FBL}\nolimits^{(\infty)}\left[  E\right]  $ is in fact the
space of all $f$ in $H\left[  E\right]  $ which are weak* continuous on the
unit ball $B_{E^{\ast}}.$ Initially, the spaces $H\left[  E\right]  $ and
$H^{p}\left[  E\right]  $ were introduced primarily to study
$\operatorname*{FBL}\nolimits^{(p)}\left[  E\right]  ,$ which was the main
focus of authors. Subsequently, several other sublattices of $H\left[
E\right]  $ have been investigated, notably in \cite{L-968}. Here, we restrict
ourselves to mentioning the following ones, as we will refer to them in a
specific result. The ideal of $H\left[  E\right]  $ generated by $\{\delta
_{x}:x\in E\}$ will be denoted by $I\left[  E\right]  ,$ while the
$H_{w^{\ast}}[E]$ is consisting of those functions $f\in H[E]$ that are weak*
continuous on the ball $B_{E^{\ast}}.$ We also define $I_{w^{\ast}%
}[E]=I[E]\cap H_{w^{\ast}}\left[  E\right]  $ and $H_{w^{\ast}}^{p}%
[E]=H^{p}\left[  E\right]  \cap H_{w^{\ast}}\left[  E\right]  .$

On of the central motivations for studying free Banach lattices lies in the
rich interplay between Banach spaces and Banach lattices. One of such
connections---extensively explored in the monograph \cite{L-925} and central
to our approach -- is the relationship between the operator $T$ and its
lattice counterpart $\overline{T}.$ Let us say a few words about that. Given
two Banach spaces $E$ and $F.$ Every bounded operator $T:F\longrightarrow E$
gives rise in a natural way to a lattice homomorphism $\overline
{T}:\operatorname*{FBL}\nolimits^{(p)}\left[  F\right]  \longrightarrow
\operatorname*{FBL}\nolimits^{(p)}\left[  E\right]  .$ First we compose with
$\delta_{E}$ to get a bounded operator $\delta_{E}\circ T:F\longrightarrow
\operatorname*{FBL}\nolimits^{(p)}\left[  E\right]  ,$ then we use the
universal property to extend this operator to $\operatorname*{FBL}%
\nolimits^{(p)}\left[  F\right]  .$ That $\overline{T}=\widehat{\delta
_{E}\circ T}.$ In \cite{L-925} the authors showed that $\overline{T}f=f\circ
T^{\ast}$ for all $f\in\operatorname*{FBL}\nolimits^{(p)}\left[  F\right]  .$
One of the main ideas considered in \cite{L-925} is to study the relationship
between properties of $T$ and corresponding properties of $\overline{T}.$ For
instance it was shown in that $T$ is injective (respectively, surjective, with
dense range) if and only if $\overline{T}$ is so. We will give a new proof for
the first point as its original proof is invalid as explained in Section 3.

\textbf{Bounded weak* topology. }Since this topology will be used multiple
times, we begin with a brief overview. Let $E$ be a normed space and let
$(x_{n})$ be a sequence in $E$ that converges to $0.$ For $x^{\ast}\in
X^{\ast}$ we let $B(x^{\ast},(x_{n}))$ the set of all elements $y^{\ast}$ in
$E^{\ast}$ satisfying $\left\vert \left(  y^{\ast}-x^{\ast}\right)  \left(
x_{n}\right)  \right\vert <1$ for each all $n.$ The collection of sets
$B(x^{\ast},(x_{n}))$ forms a basis of the topology known as the bounded
weak*\ topology of $E^{\ast}.$ A net $(x_{\alpha}^{\ast})_{\alpha\in A}$
converges to $x^{\ast}$ with respect to this topology, written $x_{\alpha
}^{\ast}\overset{bw^{\ast}}{\longrightarrow}x^{\ast}$, if $\left(  x_{\alpha
}^{\ast}\right)  _{\alpha\in A}$ converges uniformly to $x^{\ast}$ on each
norm compact subset of $E.$ In \cite{a-2643} Dieudonn\'{e} showed that this is
equivalent to the fact that the above convergence holds on each sequence
converging to $0.$ For further knowledge about this topology we refer the
reader to the paper \cite{a-2643} and the books \cite{b-2554,b-1901}.

It is worth noting that bounded weak* converging sequences are norm bounded,
but this property does not extend to nets. Indeed, a bounded weak* convergent
net is not necessarily eventually bounded, as the following example will show.

\begin{example}
Let $E$ be a nonseparable Banach space. For each compact $K$ in $E$ and each
integer $n$ in $\mathbb{N}$ we can find $x^{\ast}\in E^{\ast}$ such that
$x^{\ast}\left(  K\right)  =0$ and $\left\Vert x^{\ast}\right\Vert =n;$ this
linear functional will be denoted by $x_{\left(  K,n\right)  }^{\ast}.$ It is
obvious that the net $\left(  x_{\left(  K,n\right)  }^{\ast}\right)  $ is not
norm bounded and eventually vanishes on each compact subset of $E,$ hence
$x_{\left(  K,n\right)  }^{\ast}\overset{bw^{\ast}}{\longrightarrow}0.$ Here
the set $\left(  K,n\right)  \in\mathcal{K}\times\mathbb{N},$ where
$\mathcal{K}$ denotes the set of all compact subsets in $E,$ is ordered in a
natural way: $\left(  K,n\right)  \leq\left(  K^{\prime},n\right)
\Leftrightarrow K\subseteq K^{\prime}$ and $n\leq n^{\prime}.$
\end{example}

Recall form \cite[Corollary 2.5.7]{b-2554} that a subset $A$ of a dual space
$X^{\ast}$ is bounded weak* closed if and only if $A\cap nB_{X^{\ast}}$ is
weak* closed for every $n\in\mathbb{N}.$ Having this in mind,
Banach-Dieudonn\'{e} Theorem (see \cite[Theorem 3.92]{b-1417}) in equivalent
to the following, which may be viewed as an analogue of Mazur's Theorem.

\begin{theorem}
\label{BD}Let $X$ be a Banach space \textit{and let} $A$ \textit{be a convex
set in} $X^{\ast}$. Then $A$ is bounded weak* closed if and only if $A$ is
weak* closed.\textit{ In particular we have }$\overline{A}^{bw^{\ast}%
}=\overline{A}^{w^{\ast}}.$
\end{theorem}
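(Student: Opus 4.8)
The plan is to deduce the equivalence from the two facts recorded just before the statement: Corollary 2.5.7 of \cite{b-2554}, which says that a set is bounded weak* closed exactly when its intersection with each ball $nB_{X^*}$ is weak* closed, and the Banach--Dieudonn\'{e} (Krein--Smulian) theorem in the form \cite[Theorem 3.92]{b-1417}. The trivial implication is that weak* closed implies bounded weak* closed, and this needs no convexity: each $nB_{X^*}$ is weak* compact by Banach--Alaoglu, hence weak* closed, so if $A$ is weak* closed then every $A\cap nB_{X^*}$ is weak* closed and Corollary 2.5.7 applies.

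For the converse, suppose $A$ is convex and bounded weak* closed. By Corollary 2.5.7, $A\cap nB_{X^*}$ is weak* closed for every $n\in\mathbb{N}$; since $rB_{X^*}$ is weak* closed, the same holds for $A\cap rB_{X^*}=(A\cap nB_{X^*})\cap rB_{X^*}$ whenever $r\le n$, so the hypotheses of Krein--Smulian are met. That theorem then forces the convex set $A$ to be weak* closed. The only bookkeeping here is matching the integer radii of Corollary 2.5.7 with the ``all $r>0$'' form of Krein--Smulian, which is routine.

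The subtle point --- the part I expect to be the real obstacle --- is the final assertion $\overline{A}^{bw^*}=\overline{A}^{w^*}$. One inclusion, $\overline{A}^{bw^*}\subseteq\overline{A}^{w^*}$, is automatic because the bounded weak* topology is finer than the weak* topology. For the reverse one is tempted to say that $\overline{A}^{bw^*}$ is a bounded weak* closed convex set and invoke the equivalence already proved; but this is \emph{not} legitimate, since the bounded weak* topology is not a vector topology and there is no a priori reason for the bounded weak* closure of a convex set to be convex. I would therefore prove $\overline{A}^{w^*}\subseteq\overline{A}^{bw^*}$ directly by a Hahn--Banach separation, in the spirit of Mazur's theorem. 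Take $x^*\notin\overline{A}^{bw^*}$; then for some sequence $y_k\to 0$ the basic neighborhood $B(x^*,(y_k))$ is disjoint from $A$, i.e.\ $\sup_k\left\vert (a-x^*)(y_k)\right\vert\ge 1$ for every $a\in A$. Because $y_k\to 0$, the affine map $T\colon z^*\mapsto\big((z^*-x^*)(y_k)\big)_k$ sends $X^*$ into $c_0$ and is continuous from $(X^*,w^*)$ to $(c_0,\text{weak})$, the point being that for $\phi=(\phi_k)\in\ell_1=c_0^*$ the series $\sum_k\phi_k y_k$ converges in $X$ and $\phi\circ T$ equals the weak* continuous functional $z^*\mapsto(z^*-x^*)\big(\sum_k\phi_k y_k\big)$.

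Then $T(A)$ is convex and disjoint from the open unit ball $U=\{\xi\in c_0:\left\Vert\xi\right\Vert_\infty<1\}$. Separating these two convex sets by Hahn--Banach in $c_0$ produces $\phi\in\ell_1\setminus\{0\}$ with $\phi(T(a))\ge\sup_U\phi=\left\Vert\phi\right\Vert_1>0$ for all $a\in A$. Pulling this back through $y:=\sum_k\phi_k y_k\in X$ gives $(a-x^*)(y)\ge\left\Vert\phi\right\Vert_1$ for all $a\in A$, so the weak* open half-space $\{z^*:z^*(y)<x^*(y)+\left\Vert\phi\right\Vert_1\}$ contains $x^*$ and misses $A$; hence $x^*\notin\overline{A}^{w^*}$. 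This establishes $\overline{A}^{w^*}\subseteq\overline{A}^{bw^*}$, whence the closure equality. I would remark that this separation argument in fact re-proves the equivalence itself, since $A$ is bounded weak* closed iff $A=\overline{A}^{bw^*}=\overline{A}^{w^*}$ iff $A$ is weak* closed. The crux of the whole proof is thus the reduction to a separation in $c_0$, which replaces the illegitimate appeal to convexity of the bounded weak* closure.
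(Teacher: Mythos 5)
Your proof is correct, and for the biconditional itself it takes the same route as the paper, which offers no argument beyond combining \cite[Corollary 2.5.7]{b-2554} with the cited Krein--\u{S}mulian/Banach--Dieudonn\'{e} theorem. Where you genuinely diverge is the closure identity $\overline{A}^{bw^{\ast}}=\overline{A}^{w^{\ast}}$, which the paper treats as immediate and you prove from scratch. Two remarks. First, your stated reason for distrusting the short argument is inaccurate: the bounded weak* topology \emph{is} a locally convex vector topology --- with the neighbourhood base $B(x^{\ast},(x_{n}))$ used in the paper it is exactly the topology generated by the seminorms $z^{\ast}\longmapsto\sup_{n}\left\vert z^{\ast}\left(  x_{n}\right)  \right\vert$ over null sequences --- so $\overline{A}^{bw^{\ast}}$ is convex and bounded weak* closed, and the equivalence already established gives the missing inclusion in one line. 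Second, your $c_{0}$-separation argument is nonetheless valid and is essentially the classical proof of the hard direction of Banach--Dieudonn\'{e}: the affine map $T:z^{\ast}\longmapsto\left(  \left(  z^{\ast}-x^{\ast}\right)  \left(  y_{k}\right)  \right)  _{k}$ does land in $c_{0}$, the separation of $T(A)$ from the open unit ball produces $\phi\in\ell_{1}$ with $\sup_{U}\phi=\left\Vert \phi\right\Vert _{1}$, and pulling back to evaluation at $y=\sum_{k}\phi_{k}y_{k}\in X$ yields a weak* open half-space through $x^{\ast}$ missing $A$. What your route buys is self-containedness: as you note, the separation argument alone re-proves the entire statement directly from the definition of the basic neighbourhoods, whereas the paper's treatment is purely a citation.
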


This result is also referred to as Krein-\u{S}mulian Theorem in
\cite[Corollary 8.47]{b-1901}.

It was noted in \cite{L-925} that elements in the free Banach lattice
$\operatorname*{FBL}\nolimits^{(p)}\left[  E\right]  $ are bounded weak*
continuous. For the sake of completeness, we provide a proof of this result,
as it plays a crucial role in our work.

\begin{lemma}
Let $E$ be a Banach space and $p\in\left[  1,\infty\right]  .$ Then every
$f\in\operatorname*{FBL}\nolimits^{(p)}\left[  E\right]  $ is bounded weak*
continuous on $E^{\ast}$.
\end{lemma}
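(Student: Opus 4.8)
The plan is to reduce $bw^*$-continuity on all of $E^*$ to ordinary weak* continuity on the bounded sets $nB_{E^*}$, and then to verify the latter for $f\in\operatorname{FBL}^{(p)}[E]$ by approximation. The reduction rests on the closed-set description recalled above (a set is $bw^*$-closed iff its intersection with each $nB_{E^*}$ is weak* closed). Since a map into $\mathbb{R}$ is continuous precisely when preimages of closed sets are closed, and since $f^{-1}(C)\cap nB_{E^*}=(f|_{nB_{E^*}})^{-1}(C)$ with $nB_{E^*}$ itself weak* compact, it follows that $f$ is $bw^*$-continuous as soon as each restriction $f|_{nB_{E^*}}$ is weak* continuous. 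This is exactly the step that circumvents the pathology exhibited in the Example above: a naive net argument would stumble on unbounded $bw^*$-convergent nets, whereas the closed-set formulation only ever sees bounded pieces.

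Next I would use positive homogeneity of the elements of $H[E]$ to collapse the family $\{nB_{E^*}\}$ to the single ball $B_{E^*}$. The dilation $x^*\mapsto nx^*$ is a weak*-homeomorphism of $B_{E^*}$ onto $nB_{E^*}$, and $f(nx^*)=nf(x^*)$, so weak* continuity of $f$ on $B_{E^*}$ propagates to every $nB_{E^*}$. Hence it suffices to prove that each $f\in\operatorname{FBL}^{(p)}[E]$ is weak* continuous on $B_{E^*}$.

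For this I would exploit that $\operatorname{FBL}^{(p)}[E]$ is, by construction, the closed sublattice of $H^p[E]$ generated by $\delta_E(E)$. Each generator $\delta_E(x)$ is the evaluation $x^*\mapsto x^*(x)$, which is weak* continuous on all of $E^*$; and finite lattice-linear expressions in the $\delta_E(x)$ (built from $+$, scalar multiples, $\vee$, $\wedge$) remain weak* continuous on $B_{E^*}$, since these operations preserve continuity. The functions in the generated sublattice before closure are precisely such expressions, and they converge uniformly on $B_{E^*}$ to $f$: testing the $\operatorname{FBL}^{(p)}$-norm against a single functional $x^*\in B_{E^*}$ (which satisfies the defining constraint $\sum_k|x^*(x)|^p\le 1$ on $B_E$) gives $\sup_{x^*\in B_{E^*}}|g(x^*)|\le\|g\|_{\operatorname{FBL}^{(p)}[E]}$, so $\operatorname{FBL}^{(p)}$-norm convergence forces uniform convergence on $B_{E^*}$.

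Putting these together, $f$ is a uniform limit on $B_{E^*}$ of weak*-continuous functions, hence weak* continuous on $B_{E^*}$; by homogeneity it is weak* continuous on each $nB_{E^*}$; and by the closed-set characterization it is $bw^*$-continuous on $E^*$. The main obstacle is the reduction in the first paragraph---recognizing that $bw^*$-continuity must be handled through restrictions to bounded sets rather than through nets, precisely because $bw^*$-convergent nets need not be bounded. Once that reduction is in place, the remaining steps are routine approximation, resting on the norm domination $\|\cdot\|_{\operatorname{FBL}^{(\infty)}[E]}\le\|\cdot\|_{\operatorname{FBL}^{(p)}[E]}$.
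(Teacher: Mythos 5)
Your proof is correct and follows essentially the same route as the paper: both reduce bounded weak* continuity of $f$ to weak* continuity on the bounded sets $nB_{E^{\ast}}$ via the standard characterization of $bw^{\ast}$-closed sets. The only difference is that you additionally prove the ingredient that $f$ is weak* continuous on $B_{E^{\ast}}$ (by uniform approximation with lattice-linear expressions in the $\delta_{E}(x)$, using $\sup_{x^{\ast}\in B_{E^{\ast}}}|g(x^{\ast})|\leq\Vert g\Vert_{\operatorname*{FBL}^{(p)}[E]}$), whereas the paper simply invokes this fact as known from \cite[Proposition 2.2]{L-925}.
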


\begin{proof}
Let $f\in\operatorname*{FBL}\nolimits^{(p)}\left[  E\right]  $ and let $A$ be
a closed set in $\mathbb{R}.$ We want to show that $f^{-1}\left(  A\right)  $
is bounded weak* closed. To this end let $\left(  x_{\alpha}^{\ast}\right)  $
be a bounded net in $f^{-1}\left(  A\right)  $ such that $x_{\alpha}^{\ast
}\overset{bw^{\ast}}{\longrightarrow}x^{\ast}.$ Then in particular,
$x_{\alpha}^{\ast}\overset{w^{\ast}}{\longrightarrow}x^{\ast}.$ But as $f$ is
weak* continuous on bounded subsets of $E^{\ast}$ we get $f\left(  x_{\alpha
}^{\ast}\right)  \longrightarrow f\left(  x^{\ast}\right)  \in A$ as $A$ is
closed in $\mathbb{R}$. According to \cite[Lemma 2.7.5]{b-2554} this proves
that $f$ is bounded weak* continuous as it was claimed.
\end{proof}

\section{Order denseness}

Our main result is this section is Theorem \ref{Main} which asserts that a
Banach lattice $E$ is finite-dimensional if and only if $\operatorname*{FVL}%
\left[  E\right]  $ is order dense in $\operatorname*{FBL}\nolimits^{(p)}%
\left[  E\right]  $ for any $p\in\left[  1,\infty\right]  .$ Before proving
this theorem we require a lemma. This lemma was originally stated in
\cite{L-925}, and we provide here an easy and short proof of it. The version
we present is slightly more general, making it more convenient to employ in
subsequent arguments.

\begin{lemma}
\textit{\label{F}}Let $E$ \textit{be a finite-dimensional Banach space, }$C$
an\textit{\ open cone in} $E^{\ast},$ $0\neq x_{0}^{\ast}\in C$, $B$ a bounded
set in $E^{\ast},$ \textit{and }$\varepsilon>0$. \textit{Then there exists}
$g=g\left(  E,C,x_{0}^{\ast},B,\varepsilon\right)  \in\operatorname*{FVL}%
[E]_{+}$ \textit{such} \textit{that} $g(x_{0}^{\ast})>0,\ g\leq\varepsilon$
\textit{on} $B\cap C$, \textit{and} $g$ \textit{vanishes outside} $C$.
\end{lemma}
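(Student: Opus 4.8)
The plan is to reduce the statement to a purely geometric construction in the finite-dimensional space $E^{\ast}$. Since $E$ is finite-dimensional, $E=E^{\ast\ast}$, so $\delta_{E}(E)$ is exactly the space of all linear functionals on $E^{\ast}$, and $\operatorname*{FVL}[E]$ is the sublattice of $H[E]$ generated by these linear functions; in particular every function of the form $(\phi_{1}\wedge\cdots\wedge\phi_{m})\vee 0$, with each $\phi_{i}$ linear, belongs to $\operatorname*{FVL}[E]_{+}$ and is positively homogeneous and continuous. The first reduction is to drop the parameters $B$ and $\varepsilon$: it suffices to produce $h\in\operatorname*{FVL}[E]_{+}$ with $h(x_{0}^{\ast})>0$ that vanishes outside $C$. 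Indeed, such an $h$ is continuous and positively homogeneous of degree one, hence bounded on the bounded set $B$; setting $M=\sup_{B}h<\infty$ and $g=(\varepsilon/M)h$ (or $g=h$ if $M=0$) gives a function in $\operatorname*{FVL}[E]_{+}$ with $g(x_{0}^{\ast})>0$, with $g\le\varepsilon$ on $B$ (a fortiori on $B\cap C$), and with $g$ still vanishing outside $C$.

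Next I would construct $h$ by inscribing a polyhedral cone in $C$ around the ray through $x_{0}^{\ast}$. Fix a Euclidean structure on $E^{\ast}$. Because $C$ is open and $x_{0}^{\ast}\neq 0$, there is a closed ball $D=\overline{B}(x_{0}^{\ast},\rho)\subseteq C$ with $0<\rho<\|x_{0}^{\ast}\|$, so $0\notin D$ and the circular cone $\operatorname{cone}(D)=\{ty:t\ge 0,\ y\in D\}$ is contained in $C$ and has $x_{0}^{\ast}$ in its interior. I then choose finitely many linear functionals $\phi_{1},\dots,\phi_{m}$ on $E^{\ast}$ with $\phi_{i}(x_{0}^{\ast})>0$ such that the polyhedral cone $K=\{y:\phi_{i}(y)\ge 0,\ i=1,\dots,m\}$ satisfies $K\subseteq\operatorname{cone}(D)\subseteq C$. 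Concretely, working in orthonormal coordinates in which $x_{0}^{\ast}$ lies on the first axis, one may take the functionals $y\mapsto y_{1}\pm c\,y_{j}$ for $j=2,\dots,n$; for $c$ large enough the angle between any $y\in K$ and $x_{0}^{\ast}$ is smaller than the half-angle of $\operatorname{cone}(D)$, forcing $K\subseteq\operatorname{cone}(D)$, while $\phi_{i}(x_{0}^{\ast})=\|x_{0}^{\ast}\|>0$ puts $x_{0}^{\ast}$ in the interior of $K$.

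Finally I set
\[
h=\bigl(\phi_{1}\wedge\cdots\wedge\phi_{m}\bigr)\vee 0\in\operatorname*{FVL}[E]_{+}.
\]
By construction $h(y)>0$ exactly when all $\phi_{i}(y)>0$, i.e.\ on the interior of $K$, which is contained in $C$; hence $h$ vanishes outside $C$. Moreover $h(x_{0}^{\ast})=\min_{i}\phi_{i}(x_{0}^{\ast})>0$. Applying the scaling from the first paragraph then yields the desired $g$.

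The main obstacle is the middle step: the finite-dimensional convex-geometry fact that an open cone containing a nonzero point also contains a polyhedral cone having that point in its interior. The reduction and the verification that the resulting lattice expression lies in $\operatorname*{FVL}[E]_{+}$ are routine, and finite-dimensionality is used essentially, both to identify $\delta_{E}(E)$ with all linear functionals on $E^{\ast}$ and to make the inscribed cone polyhedral, i.e.\ cut out by finitely many such functionals.
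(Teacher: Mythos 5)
Your proof is correct, and it pursues the same overall strategy as the paper's: exhibit an explicit, positively homogeneous, lattice--linear ``bump'' supported in a thin subcone of $C$ around $x_{0}^{\ast}$, with the parameters $B$ and $\varepsilon$ handled by normalization. The realization of the bump, however, is genuinely different. The paper first reduces to $E=\ell_{1}^{n}$ (transporting the statement through the lattice isomorphism induced by a linear isomorphism onto $\ell_1^n$), identifies $\operatorname*{FBL}[E]$ with the continuous functions on the dual sphere, and takes the tent function $x^{\ast}\mapsto\left(2\delta-\left\Vert x^{\ast}-x_{1}^{\ast}\right\Vert_{\infty}\right)^{+}$, homogenized by means of $\mathbf{1}_{S}=\bigvee_{k}\left\vert\delta_{e_{k}}\right\vert$; the constraint $g\leq\varepsilon$ on $B\cap C$ is built in by choosing $\delta<\varepsilon/2$ and normalizing $B$ into the unit ball. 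You instead inscribe a polyhedral cone $K\subseteq\operatorname{cone}\left(\overline{B}(x_{0}^{\ast},\rho)\right)\subseteq C$ cut out by finitely many linear functionals $\phi_{i}$ on $E^{\ast}$ (which are of the form $\delta_{x}$ by finite-dimensional reflexivity) and take $h=\left(\bigwedge_{i}\phi_{i}\right)\vee 0$, rescaling afterwards to get $g\leq\varepsilon$ on $B$. Your route avoids both the reduction to $\ell_{1}^{n}$ and the identification with $C(S)$, and membership of $h$ in $\operatorname*{FVL}[E]_{+}$ is immediate because the $\phi_{i}$ are genuinely linear; the price is the convex-geometry step that a polyhedral cone with $x_{0}^{\ast}$ in its interior can be inscribed in the circular cone over a ball inside $C$, which your functionals $y\mapsto y_{1}\pm c\,y_{j}$ with $c$ large do achieve. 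Two trivial points could be tightened: the set $\{y:\phi_{i}(y)>0\ \forall i\}$ need not equal $\operatorname{int}(K)$, but only its inclusion in $K\subseteq C\cup\{0\}$ is used (and $h(0)=0$ by homogeneity); and the recipe for the $\phi_i$ degenerates when $\dim E=1$, where one simply takes the single functional $y\mapsto\operatorname{sign}(x_0^{\ast})\,y$. Neither affects the validity of the argument.
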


Before giving the detailed proof let us explain why we can reduce ourselves to
the case $E=\ell_{1}^{n}.$ Assume that the lemma is proved for a
finite-dimensional space $F$ and assume that $E$ is isomorphic to $F.$ Let
$T:E\longrightarrow F$ be such isomorphism and denote by $S$ its inverse. We
know that $T$ can be extended to a unique lattice isomorphism $\overline
{T}:\operatorname*{FBL}\left[  E\right]  \longrightarrow\operatorname*{FBL}%
\left[  F\right]  $ with inverse $\overline{S}.$ If $g_{1}=g\left(  S^{\ast
}\left(  C\right)  ,S^{\ast}\left(  B\right)  ,S^{\ast}x_{0}^{\ast
},\varepsilon\right)  $ for $F$ then $\overline{S}g_{1}$ works for $E.$

\begin{proof}
We can assume without loss of generality that $E=\ell_{1}^{n}$ and, by
homogeneity, we may assume that $x_{0}^{\ast}\in S:=S_{\ell_{1}^{n}}$ and
$B\subseteq B_{\ell_{\infty}^{n}}.$ Let us identify $\operatorname*{FBL}%
\left[  E\right]  $ with $C\left(  S\right)  .$ Let $\delta\in\left(
0,\varepsilon/2\right)  $ such that $K=B\left(  x_{0}^{\ast},3\delta\right)
\cap S\subseteq C$ and let $x_{1}^{\ast}\in S\cap U$ with $\left\Vert
x_{0}^{\ast}-x_{1}^{\ast}\right\Vert _{\infty}=\delta.$ Let $\left(
e_{1},...,e_{n}\right)  $ be the canonical base of $\ell_{1}^{n}.$ As
$\mathbf{1}_{S}=\bigvee\limits_{k=1}^{n}\left\vert \delta_{e_{k}}\right\vert
\in\operatorname*{FVL}\left[  E\right]  $, It is clear that $g:x^{\ast
}\longmapsto\left(  2\delta-\left\Vert x^{\ast}-x_{1}^{\ast}\right\Vert
\right)  ^{+}$ is suitable.
\end{proof}

We come now to the main result of this paper.

\begin{theorem}
\label{Main}Let $E$ be a Banach space and $p\in\left[  1,\infty\right]  .$
Then $\operatorname*{FVL}\left[  E\right]  $ is order dense in
$\operatorname*{FBL}\nolimits^{(p)}\left[  E\right]  $ if and only if $E$ is finite-dimensional.
\end{theorem}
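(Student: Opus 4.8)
The plan is to prove both directions separately, treating the finite-dimensional case as the affirmative direction and the infinite-dimensional case as the place where order denseness must fail.

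For the direction where $E$ is finite-dimensional, I would show that $\operatorname{FVL}[E]$ is order dense in $\operatorname{FBL}^{(p)}[E]$ by producing, for each nonzero positive $f\in\operatorname{FBL}^{(p)}[E]$, a nonzero $g\in\operatorname{FVL}[E]_{+}$ with $g\le f$. Since $f$ is positively homogeneous and continuous on the bounded-weak* (equivalently, on bounded sets, weak*) topology, and since in finite dimensions $E^{\ast}$ has locally compact unit sphere, I can find a point $x_{0}^{\ast}$ and an open cone $C$ around it on which $f$ is bounded below by some $c>0$. The natural move is then to invoke Lemma \ref{F}: it delivers a function $g\in\operatorname{FVL}[E]_{+}$ supported inside $C$, positive at $x_{0}^{\ast}$, and bounded by a prescribed $\varepsilon$ on a bounded set intersected with $C$. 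Choosing $\varepsilon$ small enough relative to $c$ and the relevant bounded set (recall that in finite dimensions we may restrict attention to the unit sphere by homogeneity), I obtain $g\le f$ pointwise on all of $E^{\ast}$—on $C$ because $g\le\varepsilon\le c\le f$ there, and off $C$ because $g$ vanishes while $f\ge0$. This gives the order density.

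For the converse, I would argue contrapositively: if $E$ is infinite-dimensional then $\operatorname{FVL}[E]$ is \emph{not} order dense. The key obstruction is that elements of $\operatorname{FVL}[E]$ are lattice expressions in finitely many coordinate functionals $\delta_{x_{1}},\dots,\delta_{x_{m}}$, hence each such $g$ depends only on the values $x^{\ast}(x_{1}),\dots,x^{\ast}(x_{m})$; in particular $g$ vanishes on the (large) subspace $\{x^{\ast}: x^{\ast}(x_{i})=0 \text{ for all } i\}$, which is nontrivial precisely because $E$ is infinite-dimensional. The strategy is to exhibit a positive element $f\in\operatorname{FBL}^{(p)}[E]$ such that \emph{no} nonzero $g\in\operatorname{FVL}[E]_{+}$ satisfies $g\le f$. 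Concretely, I would build $f$ that is strictly positive on a bounded-weak* dense set of directions but whose behavior forces any dominated finitely-generated lattice expression to exceed it somewhere; this is where the nonseparability/infinite-dimensional Example in Section 2 (unbounded bounded-weak* convergent nets vanishing on compacts) is suggestive, since it shows how functionals can escape to infinity while staying small on compacts, defeating uniform control by finitely many coordinates.

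The hard part will be the converse direction: making rigorous the intuition that a finite lattice expression in coordinate functionals cannot be squeezed beneath a suitably chosen $f$ without being forced to zero. The delicate point is that order denseness only requires domination by \emph{some} nonzero positive element, so I must ensure the candidate $f$ genuinely blocks \emph{every} finitely-generated $g$, not merely a specific one. I expect to need the bounded-weak* topology essentially here—using that $f\in\operatorname{FBL}^{(p)}[E]$ is bounded-weak* continuous (the Lemma proved in Section 2) while a generic $g\in\operatorname{FVL}[E]$ is continuous only through finitely many coordinates—to separate the infinite-dimensional behavior of $f$ from the finite-dimensional reach of $g$, and thereby show the domination $g\le f$ with $g\ne0$ is impossible.
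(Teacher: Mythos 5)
Your finite-dimensional direction is essentially the paper's own argument: use compactness of the sphere to find an open set $U$ (and the cone $C$ it generates) on which $f\geq\varepsilon$, invoke Lemma \ref{F} to get $g\in\operatorname{FVL}[E]_{+}$ positive at $x_{0}^{\ast}$, dominated by $\varepsilon$ on $S\cap C$ and vanishing off $C$, then extend the inequality $g\leq f$ from the sphere to all of $E^{\ast}$ by positive homogeneity. That half is fine.

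The infinite-dimensional direction, however, has a genuine gap: you correctly identify the obstruction mechanism --- any $g\in\operatorname{FVL}[E]$ is a lattice expression in finitely many $\delta_{z_{1}},\dots,\delta_{z_{m}}$ and therefore only ``sees'' the restriction of $x^{\ast}$ to the finite-dimensional space $\operatorname{span}(z_{1},\dots,z_{m})$ --- but you never construct the blocking element $f$, and that construction is the heart of the proof. Saying you would ``build $f$ that is strictly positive on a bounded-weak* dense set of directions but whose behavior forces any dominated finitely-generated lattice expression to exceed it somewhere'' is a restatement of the goal, not an argument; moreover, it is not enough for $f$ to be strictly positive on a dense set of directions, since a dominated $g$ could in principle also be supported there. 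The paper's construction is concrete: pick linearly independent $x_{n}$ with $\left\Vert x_{n}\right\Vert \leq 2^{-n}$ and $u=\sum x_{n}\neq 0$, set $u_{n}=\sum_{k=1}^{n}x_{k}$, and put $f_{n}=\bigwedge_{k=1}^{n}\left\vert \delta_{u_{k}}\right\vert \in\operatorname{FVL}[E]$. The Birkhoff inequality gives $0\leq f_{n}-f_{n+1}\leq\left\vert \delta_{u_{n}}-\delta_{u_{n+1}}\right\vert$, so $(f_{n})$ is norm Cauchy and $f:=\lim f_{n}$ exists in $\operatorname{FBL}^{(p)}[E]$ with $f_{n}\downarrow f$; a functional $u^{\ast}$ with $u^{\ast}(u_{n})\geq 1$ for all $n$ witnesses $f>0$. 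Then for $0\leq g\leq f$ with $g=H(z_{1},\dots,z_{m})$, one chooses $u_{k}\notin F=\operatorname{span}(z_{1},\dots,z_{m})$ (possible by linear independence) and, for any $y^{\ast}$, a Hahn--Banach extension $z^{\ast}$ agreeing with $y^{\ast}$ on $F$ but with $z^{\ast}(u_{k})=0$; then $0=f_{k}(z^{\ast})\geq f(z^{\ast})\geq g(z^{\ast})=g(y^{\ast})$, forcing $g=0$. Note that this argument uses no bounded weak* machinery at all --- only norm convergence of $(f_{n})$ and Hahn--Banach --- so your expectation that the bounded weak* topology is essential here, and your appeal to the nonseparability example, point in the wrong direction. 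Without producing an explicit $f$ of this kind (an infinite decreasing infimum realized as a norm limit), the converse direction is not proved.
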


\begin{proof}
\textbf{Finite-dimensional case.} We identify $\operatorname*{FBL}%
\nolimits^{\left(  p\right)  }\left[  E\right]  $ with the space of continuous
functions on the sphere $S_{E^{\ast}}$ and we fix an element $f\in
\operatorname*{FBL}\nolimits^{\left(  p\right)  }\left[  E\right]  $ with
$f>0.$ Then for some open set $U$ of $S$ and some $\varepsilon>0$ we have
$f\left(  x\right)  \geq\varepsilon$ for all $x\in U.$ Fix an element
$x_{0}^{\ast}$ in $U$ and consider the cone $C$ generated by $U.$ The function
$g$ given by Lemma \ref{F} associated to $E,C,S,\varepsilon$ and $x_{0}^{\ast
}$ satisfies : $g<\varepsilon\leq f$ on $U.$ Thus as $f$ and $g$ are
positively homogeneous $g\leq f$ on $C=cone\left(  U\right)  .$ Moreover as
$g$ vanishes outside $C$ this inequality remains valid on $E^{\ast},$ which
shows the order denseness.

\textbf{Infinite-dimensional case.} We will prove that $\operatorname*{FVL}%
\left[  E\right]  $ is never order dense in $\operatorname*{FBL}%
\nolimits^{(p)}\left[  E\right]  .$ In this case we choose a sequence $\left(
x_{n}\right)  _{n\geq1}$ of linearly independent vectors in $E$ satisfying
$\left\Vert x_{n}\right\Vert \leq2^{-n}$ and $u=%
{\textstyle\sum\limits_{n=1}^{\infty}}
x_{n}\neq0.$ Moreover let $u^{\ast}$ be a linear functional in $E^{\ast}$
satisfying $u^{\ast}\left(  u\right)  =2.$ Thus $u^{\ast}\left(  u_{n}\right)
\geq1$ for $n$ large enough, where $u_{n}=%
{\textstyle\sum\limits_{k=1}^{n}}
x_{k}.$ By omitting a finite number of terms of the sequence $\left(
u_{n}\right)  ,$ we may assume now that the sequence $\left(  u_{n}\right)  $
satisfies the following conditions:

(i) $u_{1},u_{2},...$ are linearly independent;

(ii) $\left\Vert u_{n+1}-u_{n}\right\Vert \leq2^{-n};$

(iii) $u^{\ast}\left(  u_{n}\right)  \geq1$ for all $n.$

Now let $f_{n}=\bigwedge\limits_{k=1}^{n}\left\vert u_{k}\right\vert
\in\operatorname*{FVL}\left[  E\right]  .$ Using Birkhoff inequality we
observe that%
\[
0\leq f_{n}-f_{n+1}\leq\left\vert u_{n}\right\vert -\left\vert u_{n}%
\right\vert \wedge\left\vert u_{n+1}\right\vert \leq\left\vert u_{n}%
-u_{n+1}\right\vert .
\]
Thus%
\[
\left\Vert f_{n}-f_{n+1}\right\Vert _{\operatorname*{FBL}\nolimits^{\left(
p\right)  }\left[  E\right]  }\leq\left\Vert u_{n}-u_{n+1}\right\Vert
_{\operatorname*{FBL}\nolimits^{\left(  p\right)  }\left[  E\right]
}=\left\Vert u_{n}-u_{n+1}\right\Vert _{E}\leq2^{-n}.
\]
Hence $f:=\lim f_{n}$ is well defined in $\operatorname*{FBL}%
\nolimits^{\left(  p\right)  }\left[  E\right]  $ and as $\left(
f_{n}\right)  $ is decreasing we have also $f_{n}\downarrow f.$ As%
\[
f_{n}\left(  u^{\ast}\right)  =\bigwedge\limits_{k=1}^{n}\left\vert
u_{k}\right\vert \left(  u^{\ast}\right)  =\bigwedge\limits_{k=1}%
^{n}\left\vert u_{k}\left(  u^{\ast}\right)  \right\vert =\bigwedge
\limits_{k=1}^{n}\left\vert u^{\ast}\left(  u_{k}\right)  \right\vert \geq1
\]
for all $n$ and $f_{n}\longrightarrow f$ in $\operatorname*{FBL}%
\nolimits^{(p)}\left[  E\right]  $ we have $f\left(  u^{\ast}\right)  \geq1.$
In particular $f>0.$

Assume now that $g\in\left[  0,f\right]  \cap\operatorname*{FVL}\left[
E\right]  $ and write $g$ as a lattice linear expression of some elements in
$E.$ Say $g=H\left(  z_{1},...,z_{n}\right)  $ for some $z_{1},...,z_{n}$ in
$E.$ Then if we consider $F=\mathrm{span}\left(  z_{1},...,z_{n}\right)  $ and
$y^{\ast}\in E^{\ast}$ we have $g\left(  y^{\ast}\right)  =g\left(  z^{\ast
}\right)  $ for every $z^{\ast}\in E^{\ast}$ that agrees with $y^{\ast}$ on
$F.$ Now by condition (i) above we can choose an element $u_{k}$ not belonging
to $F$ and by Hahn-Banach theorem we can find $z^{\ast}\in E^{\ast}$ such that
$z^{\ast}$ agrees with $y^{\ast}$ on $F$ and $z^{\ast}\left(  u_{k}\right)
=0.$ Then%
\[
0=f_{k}\left(  z^{\ast}\right)  \geq f\left(  z^{\ast}\right)  \geq g\left(
z^{\ast}\right)  =g\left(  y^{\ast}\right)  \geq0.
\]
This shows that $g=0$ and consequently, $\left[  0,f\right]  \cap
\operatorname*{FVL}\left[  E\right]  =\left\{  0\right\}  $ as required.
Therefore, we conclude that the free vector lattice $\operatorname*{FVL}%
\left[  E\right]  $ is not order dense in $\operatorname*{FBL}\nolimits^{(p)}%
\left[  E\right]  ,$ which completes the proof.
\end{proof}

Consider two Banach spaces $E$ and $F$ and a bounded operator
$T:F\longrightarrow E.$ Then as noted above we can associate to $T,$ in a
natural way, a lattice homomorphism $\overline{T}:\operatorname*{FBL}%
\nolimits^{(p)}\left[  F\right]  \longrightarrow\operatorname*{FBL}%
\nolimits^{(p)}\left[  E\right]  $ by extending $T$ to $\operatorname*{FBL}%
\nolimits^{(p)}\left[  F\right]  $ and then compose with the natural embedding
$E\hookrightarrow\operatorname*{FBL}\nolimits^{(p)}\left[  E\right]  .$ One of
the main ideas studied in \cite{L-925} is the relationship between operators
$T$ and $\overline{T}.$ In \cite{L-925} the authors showed that if
$\mathcal{P}$ is one of the properties -- injectivity, surjectivity and
bijectivity -- then $T$ has property $\mathcal{P}$ iff $\overline{T}$ does.
Among these, only the proof of injectivity requires order denseness. For this
reason we provide a corrected proof for it.

\begin{proposition}
Let $E$ and $F$ be two Banach spaces, $T\in L\left(  F,E\right)  $ and
$\overline{T}:\operatorname*{FBL}\nolimits^{(p)}\left[  F\right]
\longrightarrow\operatorname*{FBL}\nolimits^{(p)}\left[  E\right]  $ be the
associated lattice homomorphism introduced above. Then $T$ is injective iff
$\overline{T}$ is injective.
\end{proposition}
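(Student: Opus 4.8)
The plan is to establish the two implications separately, the second being the substantive one. For the forward implication I would argue by contraposition: if $T$ is not injective, pick $0\neq x\in F$ with $Tx=0$. Since $\delta_{F}$ is an isometry, $\delta_{F}(x)\neq 0$, yet $\overline{T}\,\delta_{F}(x)=\delta_{E}(Tx)=\delta_{E}(0)=0$, using $\overline{T}\circ\delta_{F}=\delta_{E}\circ T$, which holds by the very construction $\overline{T}=\widehat{\delta_{E}\circ T}$. Hence $\overline{T}$ is not injective. This half needs nothing beyond the universal property and no order denseness, which is exactly why it was unaffected by the gap in \cite{L-925}.

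For the reverse implication---$T$ injective $\Rightarrow$ $\overline{T}$ injective---I would exploit the explicit formula $\overline{T}f=f\circ T^{\ast}$ recalled in Section 2. Let $f\in\operatorname*{FBL}\nolimits^{(p)}[F]$ satisfy $\overline{T}f=0$; this means precisely that $f$ vanishes on the range $T^{\ast}(E^{\ast})\subseteq F^{\ast}$. The goal is then to upgrade the statement ``$f$ vanishes on $T^{\ast}(E^{\ast})$'' to ``$f$ vanishes on all of $F^{\ast}$'', that is, $f=0$.

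The key steps are three. First, since $T$ is injective, a standard duality computation (the pre-annihilator of $T^{\ast}(E^{\ast})$ equals $\ker T=\{0\}$) shows that $T^{\ast}(E^{\ast})$ is weak* dense in $F^{\ast}$. Second---and this is where the bounded weak* topology enters---the subspace $T^{\ast}(E^{\ast})$ is convex, so Theorem \ref{BD} (Krein--\u{S}mulian) gives $\overline{T^{\ast}(E^{\ast})}^{bw^{\ast}}=\overline{T^{\ast}(E^{\ast})}^{w^{\ast}}=F^{\ast}$; that is, $T^{\ast}(E^{\ast})$ is even bounded weak* dense. Third, by the Lemma of Section 2, $f$ is bounded weak* continuous on $F^{\ast}$, so its zero set is bounded weak* closed; containing the bounded weak* dense set $T^{\ast}(E^{\ast})$, it must equal all of $F^{\ast}$. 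Therefore $f=0$, and $\overline{T}$ is injective.

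The main obstacle---and the reason the original argument needed order denseness while this one does not---is precisely the gap between weak* density and bounded weak* density. Elements of $\operatorname*{FBL}\nolimits^{(p)}[F]$ are only guaranteed to be weak* continuous on \emph{bounded} subsets of $F^{\ast}$, not on all of $F^{\ast}$, so weak* density of $T^{\ast}(E^{\ast})$ by itself does not allow one to propagate the vanishing of $f$. Krein--\u{S}mulian is exactly the device that converts weak* density of a subspace into bounded weak* density, after which the bounded weak* continuity supplied by the Lemma closes the argument cleanly.
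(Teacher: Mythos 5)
Your proposal is correct and follows essentially the same route as the paper's own proof: for the substantive direction it uses the formula $\overline{T}f=f\circ T^{\ast}$, the bounded weak* continuity of elements of $\operatorname*{FBL}\nolimits^{(p)}[F]$, and the Banach--Dieudonn\'{e} (Krein--\u{S}mulian) theorem to pass from weak* density to bounded weak* density of $T^{\ast}(E^{\ast})$. The only cosmetic difference is that you also write out the easy converse implication, which the paper omits as immediate.
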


\begin{proof}
We need only to prove the forward implication. Assume then that $T$ is
injective and let $f\in\operatorname*{FBL}\nolimits^{(p)}\left[  F\right]  $
such that $\overline{T}f=0.$ Then $f$ vanishes on the range $T^{\ast}\left(
E^{\ast}\right)  $ of $T^{\ast}.$ As $f$ is bounded weak* continuous $f$
vanishes on $\overline{T^{\ast}\left(  E^{\ast}\right)  }^{bw^{\ast}}.$ As $T$
is injective $T^{\ast}$ has a weak* dense range and it is sufficient to show
that $\overline{T^{\ast}\left(  E^{\ast}\right)  }^{bw^{\ast}}=\overline
{T^{\ast}\left(  E^{\ast}\right)  }^{w^{\ast}}.$ But this follows from
Banach-Dieudonn\'{e} Theorem (Theorem \ref{BD}).
\end{proof}

In \cite[Theorem 3.4]{L-925} it was shown if $F$ is closed subspace of $E$ and
$\iota:F\hookrightarrow E$ is the inclusion map then $\overline{\iota
}:\operatorname*{FBL}\nolimits^{(p)}\left[  F\right]  \longrightarrow
\operatorname*{FBL}\nolimits^{(p)}\left[  E\right]  $ is order continuous.
However, the proof relies on order denseness of $\operatorname*{FVL}\left[
F\right]  $ in $\operatorname*{FBL}\nolimits^{(p)}\left[  F\right]  $ which
holds only in the finite-dimensional. The converse problem was also raised
(see \cite[Remark 3.5]{L-925}). What we have been were able to prove that the
former result is true for $p=\infty;$ for other values of $p,$ the question
remains open. Furthermore, we give a partial answer concerning the converse
problem. Before presenting the proof of these results let us state an
interesting result, which is very close to \cite[Lemma 2.9]{L-925}.

\begin{theorem}
\label{A}Let $U$ be a bounded weak* open subset of $E^{\ast}$ and $0\neq
x_{0}^{\ast}\in U.$ Then there exists $f$ in $\operatorname*{FBL}%
\nolimits^{(\infty)}\left[  E\right]  ^{+}$ such that $f\left(  x_{0}^{\ast
}\right)  \neq0$ and $f=0$ outside of $cone\left(  U\right)  .$ Moreover if
$h\in H_{\infty}\left[  E\right]  ^{+}$ satisfies $h\left(  x^{\ast}\right)
\geq1$ for all $x^{\ast}\in U$ then $h\geq f.$
\end{theorem}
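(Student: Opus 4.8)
The plan is to mimic the construction used in the finite-dimensional case of Theorem \ref{Main}, but replace the norm topology on $E^{\ast}$ by the bounded weak* topology, which is the natural topology here since by the Lemma every element of $\operatorname*{FBL}\nolimits^{(\infty)}\left[ E\right]$ is bounded weak* continuous. First I would use that $U$ is bounded weak* open together with $x_{0}^{\ast}\in U$ to produce, by the very definition of the bounded weak* topology, a basic neighborhood $B(x_{0}^{\ast},(z_{n}))\subseteq U$, where $(z_{n})$ is a sequence in $E$ converging to $0$. This gives an explicit handle: a point $y^{\ast}$ lies in this neighborhood precisely when $\lvert (y^{\ast}-x_{0}^{\ast})(z_{n})\rvert<1$ for all $n$. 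The idea is to build $f$ as a bounded weak* continuous, positively homogeneous bump supported inside $\operatorname*{cone}(U)$ and nonzero at $x_{0}^{\ast}$.

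Next I would construct a candidate function on the sphere (or on a slice of it) and then extend by positive homogeneity. A natural choice is something like $f(x^{\ast})=\bigl(c-\sup_{n}\lvert(x^{\ast}-\lambda x_{0}^{\ast})(z_{n})\rvert\bigr)^{+}$ suitably rescaled, where $\lambda$ is the homogeneity factor that normalizes $x^{\ast}$; one must check that the supremum over the null sequence $(z_{n})$ defines a bounded weak* continuous function, which follows because bounded weak* convergence is exactly uniform convergence on sequences tending to $0$ (Dieudonn\'e's characterization recalled in the preliminaries). The positive part of such a function is automatically nonnegative, vanishes where the inner expression is negative, and by the inclusion $B(x_{0}^{\ast},(z_{n}))\subseteq U$ its support on the sphere sits inside $U$, so after homogeneous extension the support lands in $\operatorname*{cone}(U)$. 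One then verifies $f(x_{0}^{\ast})>0$ by taking the constant $c$ large enough, and checks membership in $\operatorname*{FBL}\nolimits^{(\infty)}\left[E\right]$ by invoking the description (Proposition 2.2 of \cite{L-925}) of that space as the positively homogeneous, weak* continuous functions on $B_{E^{\ast}}$ together with norm boundedness.

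For the final ``moreover'' clause I would argue pointwise. Suppose $h\in H_{\infty}\left[E\right]^{+}$ satisfies $h(x^{\ast})\geq1$ for every $x^{\ast}\in U$. The key observation is that the bump $f$ constructed above can be arranged so that $f\leq 1$ on $U$ and $f=0$ off $\operatorname*{cone}(U)$, and moreover that $f$ is dominated by the indicator of $U$ in the homogeneous sense. Concretely, on $\operatorname*{cone}(U)$ one writes each point as a positive multiple of a point of $U$ and uses positive homogeneity of both $f$ and $h$ to reduce to comparing them on $U$; there $f\leq 1\leq h$, so $f\leq h$ on $\operatorname*{cone}(U)$, while outside $\operatorname*{cone}(U)$ we have $f=0\leq h$ since $h\geq0$. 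Hence $h\geq f$ everywhere on $E^{\ast}$.

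The main obstacle I expect is the continuity and boundedness verification for the candidate $f$: one must ensure that the supremum $\sup_{n}\lvert(x^{\ast}-\lambda x_{0}^{\ast})(z_{n})\rvert$ is genuinely weak* continuous on the unit ball (not merely lower semicontinuous) and that the positively homogeneous extension has finite $\operatorname*{FBL}\nolimits^{(\infty)}$-norm. Since $(z_{n})\to0$ in norm, the family $\{z_{n}\}\cup\{0\}$ is norm compact, and uniform convergence on this compact set is exactly what bounded weak* (equivalently, on $B_{E^{\ast}}$, weak*) continuity provides; the delicate point is arranging the normalization $\lambda=\lambda(x^{\ast})$ so that homogeneity and continuity coexist, which is why restricting attention to a fixed slice where $x^{\ast}_{0}$ has a definite sign under some functional, and then extending homogeneously, is the cleanest route.
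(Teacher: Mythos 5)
Your outline identifies the right ingredients (a basic neighborhood $B(x_{0}^{\ast},(z_{n}))\subseteq U$, a positively homogeneous bump supported in $cone(U)$, and a pointwise homogeneity argument for the domination clause), but it leaves unresolved exactly the step that carries the whole difficulty, and the routes you sketch for it do not work. You propose to define the bump on a slice and extend by positive homogeneity, with a normalization factor $\lambda(x^{\ast})$. If the slice is the norm sphere, the extension is not weak* continuous on $B_{E^{\ast}}$, since $x^{\ast}\mapsto\Vert x^{\ast}\Vert$ is only weak* lower semicontinuous in infinite dimensions; if the slice is an affine hyperplane $\{x^{\ast}:x^{\ast}(z)=1\}$, the homogeneous extension is only defined on a half-space and its behaviour on the boundary $\{x^{\ast}(z)=0\}$ (which can meet $cone(U)$) breaks both continuity and the support condition. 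The paper resolves this by taking the normalizer to be the lattice expression $g_{n}=\bigvee_{k\leq n}\vert\delta_{z_{k}}\vert$ itself and setting $f_{n}=\bigl(\tfrac{1}{2}g_{n}-\bigvee_{k\leq n}\vert\delta_{z_{k}}-x_{0}^{\ast}(z_{k})g_{n}\vert\bigr)^{+}$: this is a lattice--linear expression in the $\delta_{z_{k}}$, hence automatically an element of $\operatorname*{FVL}[E]$, positively homogeneous, and it vanishes wherever $g_{n}$ does, so no case analysis on a normalizing hyperplane is needed. Your single formula with $\sup_{n}$ over all $n$ at once sidesteps the slice problem only formally, because $c$ must then itself be replaced by a homogeneous function, which is the same unsolved issue.

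There is a second gap: membership in $\operatorname*{FBL}\nolimits^{(\infty)}[E]$, not merely in $H_{w^{\ast}}[E]$. Even granting that your candidate is positively homogeneous and weak* continuous on $B_{E^{\ast}}$, you must show it is a norm limit of lattice expressions in $\delta_{E}(E)$; the paper devotes the bulk of its proof to showing that the partial infima $S_{n}=\bigwedge_{k\leq n}f_{k}$ form a norm Cauchy sequence, via the Birkhoff inequality and the decay $\Vert z_{n}\Vert\to 0$. (For $p=\infty$ one can invoke the identification of $\operatorname*{FBL}\nolimits^{(\infty)}[E]$ with the weak* continuous homogeneous functions on $B_{E^{\ast}}$, but then the weak* continuity of the countable infimum must still be proved, which amounts to the same uniform estimate.) Finally, in the domination clause your claim that $f$ can be arranged to satisfy $f\leq 1$ on all of $U$ is not available -- the paper explicitly warns that $f$ need not be dominated by $1$ on $U$, since $U$ may contain several positive multiples of the same direction; the correct argument fixes, for each $x^{\ast}$ in the support of $f$, the particular normalization $x^{\ast}/g(x^{\ast})$, shows that this point lies in $U$ and that $f\leq\tfrac{1}{2}$ there, and only then invokes homogeneity.
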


Observe that $f$ is not necessarily dominated by $1$ on $U.$

An immediate yet very useful consequence is the following:

\begin{corollary}
\label{B}Let $U$ a bounded weak* open subset of $E^{\ast}$ and $\varepsilon
>0.$ Then there exists a nonzero element $f$ in $\operatorname*{FBL}%
\nolimits^{(\infty)}\left[  E\right]  ^{+}$ such that for every $h\in
H_{\infty}\left[  E\right]  ^{+}$ with $h\left(  x^{\ast}\right)
\geq\varepsilon$ we have $h\geq f.$
\end{corollary}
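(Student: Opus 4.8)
The plan is to derive Corollary \ref{B} directly from Theorem \ref{A} by a homogeneity-and-scaling argument, so almost no new work is required. The statement of the corollary is the ``normalized'' version of Theorem \ref{A}: instead of a lower bound $h \geq 1$ on $U$ we impose $h \geq \varepsilon$, and instead of asserting the existence of a minorant tied to a specific point $x_0^\ast$ we simply assert the existence of \emph{some} nonzero positive $f$ dominated by every such $h$. First I would fix a point $0 \neq x_0^\ast \in U$ (which exists since a bounded weak* open set is in particular nonempty and, because $0$ can be excluded by shrinking $U$ or by noting $U$ contains a point distinct from $0$ as soon as $U \neq \{0\}$), apply Theorem \ref{A} to $U$ and $x_0^\ast$, and obtain $f_0 \in \operatorname*{FBL}\nolimits^{(\infty)}[E]^+$ with $f_0(x_0^\ast) \neq 0$, vanishing off $\operatorname{cone}(U)$, and minorized by every $h \in H_\infty[E]^+$ with $h \geq 1$ on $U$.

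The only remaining issue is the passage from the threshold $1$ to the threshold $\varepsilon$. Here I would use positive homogeneity together with the fact that $\operatorname*{FBL}\nolimits^{(\infty)}[E]^+$ and $H_\infty[E]^+$ are closed under positive scalar multiples. Set $f := \varepsilon f_0$; this is again a nonzero element of $\operatorname*{FBL}\nolimits^{(\infty)}[E]^+$. Now suppose $h \in H_\infty[E]^+$ satisfies $h(x^\ast) \geq \varepsilon$ for all $x^\ast \in U$. Then $\varepsilon^{-1} h \in H_\infty[E]^+$ satisfies $\varepsilon^{-1} h \geq 1$ on $U$, so by the ``moreover'' clause of Theorem \ref{A} we get $\varepsilon^{-1} h \geq f_0$, i.e.\ $h \geq \varepsilon f_0 = f$. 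This is exactly the conclusion of the corollary, with the same $f$ working for every admissible $h$ simultaneously.

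I expect no genuine obstacle here: the result is immediate once Theorem \ref{A} is in hand, and the scaling is the whole content. The one point that deserves a word of care is that $\varepsilon^{-1} h$ must again lie in the class $H_\infty[E]^+$ over which Theorem \ref{A} quantifies; this is clear because $H_\infty[E]$ is a vector space of positively homogeneous functions, so multiplying by the positive scalar $\varepsilon^{-1}$ preserves both membership and positivity of the function, as well as the finiteness of its norm. The inequality $h \geq \varepsilon$ on $U$ is stated in the corollary without explicitly naming the variable; I would write it as $h(x^\ast) \geq \varepsilon$ for all $x^\ast \in U$ to match the hypothesis of Theorem \ref{A} after scaling. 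With these observations the proof reduces to two lines and inherits all the structural properties (nonzero, positive, supported in $\operatorname{cone}(U)$) directly from $f_0$.
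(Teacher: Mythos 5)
Your proposal is correct and matches the paper's intent: the paper treats Corollary \ref{B} as an ``immediate consequence'' of Theorem \ref{A} (the long proof printed after the corollary is in fact the proof of Theorem \ref{A} itself, as its use of $x_0^\ast$ and its references to a ``first part'' and ``second part'' show), and the intended derivation is exactly your scaling argument: take $f=\varepsilon f_0$ and apply the ``moreover'' clause to $\varepsilon^{-1}h\geq 1$ on $U$. Your side remarks --- that $U$ must be nonempty (and contain a nonzero point) for the statement to make sense, and that the hypothesis $h(x^\ast)\geq\varepsilon$ is meant for all $x^\ast\in U$ --- are the correct readings of the slightly elliptical statement.
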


\begin{proof}
By definition of the bounded weak* topology \cite[Definition 2.7.1]{b-2554}
there exists a sequence $\left(  x_{n}\right)  $ in $E$ such that
$x_{n}\overset{\left\Vert .\right\Vert }{\longrightarrow}0$ and%
\[
B\left(  x_{0}^{\ast},\left(  x_{n}\right)  \right)  =\left\{  x^{\ast}\in
E^{\ast}:\left\vert x^{\ast}\left(  x_{n}\right)  -x_{0}^{\ast}\left(
x_{n}\right)  \right\vert <1\forall n\right\}  \subseteq U.
\]
We may assume without loss of generality that
\[
1=\left\vert x_{0}^{\ast}\left(  x_{1}\right)  \right\vert =\sup
\limits_{n}\left\vert x_{0}^{\ast}\left(  x_{n}\right)  \right\vert
\]
(take $\lambda U$ instead of $U$ and permute the $x_{n}$) Let
\[
V_{n}=\left\{  x^{\ast}\in E^{\ast}:\left\vert \left(  x^{\ast}-x_{0}^{\ast
}\right)  \left(  x_{i}\right)  \right\vert <\dfrac{1}{2}\text{ for
}i=1,...,n\right\}  .
\]
Let $g_{n}=\bigvee\limits_{k=1}^{n}\left\vert \delta_{x_{k}}\right\vert $ and
$f_{n}=\left(  \dfrac{1}{2}g_{n}-\bigvee\limits_{k=1}^{n}\left\vert
\delta_{x_{k}}-x_{0}^{\ast}\left(  x_{k}\right)  g_{n}\right\vert \right)
^{+}.$

\textbf{Claim 1 }$f_{n}\left(  x_{0}^{\ast}\right)  =\dfrac{1}{2}$ and for
$x^{\ast}\notin co\left(  V_{n}\right)  $ we have $f_{n}\left(  x^{\ast
}\right)  =0.$ The first equality follows easily from the fact that
$g_{n}\left(  x_{0}^{\ast}\right)  =1.$ The second is also clear if
$g_{n}\left(  x^{\ast}\right)  =0.$ Assume then that $g_{n}\left(  x^{\ast
}\right)  >0.$ Then $y^{\ast}:=\dfrac{x^{\ast}}{g_{n}\left(  x^{\ast}\right)
}\notin V_{n}.$ Then $\left\vert y^{\ast}\left(  x_{k}\right)  -x_{0}^{\ast
}\left(  x_{k}\right)  \right\vert \geq\dfrac{1}{2}$ for some $k\leq n$ and as
$g_{n}\left(  y^{\ast}\right)  =1$ we get
\[
f_{n}\left(  y^{\ast}\right)  \leq\left(  \dfrac{1}{2}-\left\vert y^{\ast
}\left(  x_{k}\right)  -x_{0}^{\ast}\left(  x_{k}\right)  \right\vert \right)
^{+}\leq0,
\]
which gives $f_{n}\left(  x^{\ast}\right)  =0$ and proves the claim.

We will show that $f:=\bigwedge\limits_{n=1}^{\infty}f_{n}$ fulfills the
desired conditions. Indeed, clearly $f\left(  x_{0}^{\ast}\right)  =\dfrac
{1}{2}.$ Let now $x^{\ast}\notin cone\left(  U\right)  .$ If $g_{n}\left(
x^{\ast}\right)  =0$ for some $n$ then $f_{n}\left(  x^{\ast}\right)  =0$ and
then $f\left(  x^{\ast}\right)  =0.$ Otherwise $g_{n}\left(  x^{\ast}\right)
>0$ for each $n.$ We claim that there exists $n$ in $\mathbb{N}$ such that
$\dfrac{x^{\ast}}{g_{n}\left(  x^{\ast}\right)  }\notin V_{n}.$ As above we
get $f_{n}\left(  \dfrac{x^{\ast}}{g_{n}\left(  x^{\ast}\right)  }\right)  =0$
which implies that $f\left(  x^{\ast}\right)  =0.$ It remains only to prove
the claim. We argue by contradiction and we assume that $\dfrac{x^{\ast}%
}{g_{n}\left(  x^{\ast}\right)  }\in V_{n}$ for each $n.$ Fix an integer $p$
and take $n\geq p.$ Then by definition of $V_{n},$
\[
\left\vert \dfrac{x^{\ast}}{g_{n}\left(  x^{\ast}\right)  }\left(
x_{p}\right)  -x_{0}^{\ast}\left(  x_{p}\right)  \right\vert <\dfrac{1}{2}.
\]
Letting $n\longrightarrow\infty$ we get%
\[
\left\vert \dfrac{x^{\ast}}{g\left(  x^{\ast}\right)  }\left(  x_{p}\right)
-x_{0}^{\ast}\left(  x_{p}\right)  \right\vert \leq\dfrac{1}{2},
\]
As this happens for every $p$ we get $\dfrac{x^{\ast}}{g\left(  x^{\ast
}\right)  }\in B\left(  x_{0}^{\ast},\left(  x_{n}\right)  \right)  \subseteq
U.$ But this contradicts our assumption that $x^{\ast}\notin co\left(
U\right)  .$

In the last step of the proof we will show that $f$ is in $\operatorname*{FBL}%
\nolimits^{\left(  \infty\right)  }\left[  E\right]  .$ In other words, $f$ is
bounded weak$^{\ast}$ continuous. It is enough to show that $S_{n}%
:=\bigwedge\limits_{k=1}^{n}f_{k}$ is a Cauchy sequence.

For $p\geq n,$
\[
0\leq S_{n}-S_{p}\leq f_{n}-\bigwedge\limits_{n}^{p}f_{k}=\bigvee
\limits_{n}^{p}\left(  f_{n}-f_{k}\right)  \leq\bigvee\limits_{n+1}%
^{p}\left\vert f_{n}-f_{k}\right\vert .
\]
On the other hand%

\begin{align*}
\left\vert f_{k}-f_{n}\right\vert  &  \leq\dfrac{1}{2}\left\vert g_{n}%
-g_{k}\right\vert +\left\vert \bigvee\limits_{i=1}^{k}\left\vert \delta
_{x_{i}}-x_{0}^{\ast}\left(  x_{i}\right)  g_{k}\right\vert ^{+}%
-\bigvee\limits_{i=1}^{n}\left\vert \delta_{x_{i}}-x_{0}^{\ast}\left(
x_{i}\right)  g_{n}\right\vert \right\vert \\
&  \leq\dfrac{1}{2}\left\vert g_{n}-g_{k}\right\vert +A+B
\end{align*}
with $A=\left\vert \bigvee\limits_{i=1}^{k}\left\vert \delta_{x_{i}}%
-x_{0}^{\ast}\left(  x_{i}\right)  g_{k}\right\vert ^{+}-\bigvee
\limits_{i=1}^{n}\left\vert \delta_{x_{i}}-x_{0}^{\ast}\left(  x_{i}\right)
g_{k}\right\vert \right\vert $

By Birkhoff Inequality we get%
\begin{align*}
\left\Vert A\right\Vert  &  \leq\bigvee\limits_{i=n+1}^{k}\left\Vert
\delta_{x_{i}}-x_{0}^{\ast}\left(  x_{i}\right)  g_{k}\right\Vert \\
&  \leq\bigvee\limits_{i=n+1}^{k}\left(  1+\left\Vert x_{0}^{\ast}\right\Vert
\left\Vert g_{k}\right\Vert \right)  \left\Vert x_{i}\right\Vert \leq
K\bigvee\limits_{i=n+1}^{k}\left\Vert x_{i}\right\Vert .
\end{align*}
On the other hand $B=\left\vert \bigvee\limits_{i=1}^{n}\left\vert
\delta_{x_{i}}-x_{0}^{\ast}\left(  x_{i}\right)  g_{k}\right\vert ^{+}%
-\bigvee\limits_{i=1}^{n}\left\vert \delta_{x_{i}}-x_{0}^{\ast}\left(
x_{i}\right)  g_{n}\right\vert \right\vert $ Using the inequality $\left\vert
\bigvee\limits_{i=1}^{n}a_{i}-\bigvee\limits_{i=1}^{n}b_{i}\right\vert
\leq\bigvee\limits_{i=1}^{n}\left\vert a_{i}-b_{i}\right\vert $ we see that
\[
B\leq\bigvee\limits_{i=1}^{n}\left\vert x_{0}^{\ast}\left(  x_{i}\right)
\right\vert \left\vert g_{k}-g_{n}\right\vert \leq\left\vert g_{k}%
-g_{n}\right\vert ,
\]
and then%
\[
\left\Vert B\right\Vert \leq\left\Vert g_{k}-g_{n}\right\Vert \leq
\bigvee\limits_{i=n+1}^{k}\left\Vert x_{i}\right\Vert .
\]
Combining all these we get%
\[
\left\Vert S_{p}-S_{n}\right\Vert \leq\left(  \dfrac{3}{2}+K\right)
\bigvee\limits_{i=n+1}^{p}\left\Vert x_{i}\right\Vert \longrightarrow0\text{
as }n\longrightarrow\infty.
\]
This completes the proof of the first part.

For the second part assume that $h$ satisfies the conditions of the lemma and
let $x^{\ast}\in E^{\ast}.$ If $g_{n}\left(  x^{\ast}\right)  =0$ for some $n$
then $0=f\left(  x^{\ast}\right)  \leq h\left(  x^{\ast}\right)  .$ Otherwise
$g\left(  x^{\ast}\right)  >0$. If $\dfrac{x^{\ast}}{g_{n}\left(  x^{\ast
}\right)  }\notin V_{n}$ for some $n$ then $0=f\left(  x^{\ast}\right)  \leq
h\left(  x^{\ast}\right)  .$ If not $\dfrac{x^{\ast}}{g_{n}\left(  x^{\ast
}\right)  }\in V_{n}$ for all $n$ and we get as above $\dfrac{x^{\ast}%
}{g\left(  x^{\ast}\right)  }\in U$ and $f\left(  \dfrac{x^{\ast}}{g\left(
x^{\ast}\right)  }\right)  \leq\dfrac{1}{2}\leq h\left(  \dfrac{x^{\ast}%
}{g\left(  x^{\ast}\right)  }\right)  ,$ which gives again $f\left(  x^{\ast
}\right)  \leq h\left(  x^{\ast}\right)  $ and we are done.
\end{proof}

Let $E$ be a Banach space and $A$ a subset of $E.$ We denote

\begin{center}
$E_{A}=\left\{  f\in\operatorname*{FBL}\nolimits^{\left(  \infty\right)
}\left[  E\right]  :f\left(  a\right)  =0\text{ for all }a\in A\right\}  .$
\end{center}

As every element $f\in\operatorname*{FBL}\nolimits^{(p)}\left[  E\right]  $ is
continuous with respect to the bounded weak* topology we have%
\[
E_{A}=E_{cone\left(  A\right)  }=E_{\overline{cone\left(  A\right)
}^{bw^{\ast}}}.
\]

The next result outlines basic properties of $E_{A}$ that will be used later.

\begin{proposition}
\label{C}Let $E$ be a Banach space and let $A,B$ be two nonempty subsets of
$E^{\ast}.$ Then the following statements hold.

\begin{enumerate}
\item[(i)] $E_{A}\subseteq E_{B}$ if and only if $B\subseteq\overline
{cone\left(  A\right)  }^{bw^{\ast}};$

\item[(ii)] $E_{A}^{d}=E_{E^{\ast}\diagdown\overline{cone\left(  A\right)
}^{bw^{\ast}}};$

\item[(iii)] $E_{A}^{dd}=E_{int\left(  \overline{\left(  cone\left(  A\right)
\right)  }^{bw^{\ast}}\right)  }.$
\end{enumerate}
\end{proposition}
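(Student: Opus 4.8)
The plan is to reduce all three statements to two ingredients: the preliminary observation recorded just above the proposition, that $E_A=E_{\overline{cone(A)}^{bw^{\ast}}}$, and the existence theorem (Theorem \ref{A}), which for any bounded weak* open $U$ and any $0\neq x_{0}^{\ast}\in U$ produces a nonzero $f\in\operatorname*{FBL}\nolimits^{(\infty)}[E]^{+}$ with $f(x_{0}^{\ast})\neq0$ that vanishes off $cone(U)$. Throughout I would write $V=\overline{cone(A)}^{bw^{\ast}}$ and $U=E^{\ast}\setminus V$; since $V$ is bounded weak* closed, $U$ is bounded weak* open. The second standing remark is that the lattice operations of $\operatorname*{FBL}\nolimits^{(\infty)}[E]$ are inherited from $H[E]$, hence pointwise, so that disjointness $|f|\wedge|g|=0$ is equivalent to $\operatorname{supp}f\cap\operatorname{supp}g=\varnothing$, where $\operatorname{supp}f=\{x^{\ast}:f(x^{\ast})\neq0\}$. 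I will also use that $V$ is a cone containing $0$ (a positive rescaling is a bounded weak* homeomorphism and $\lambda a\to0$ in norm), and consequently that $U$ is itself a cone.

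For (i) the forward direction is immediate: if $B\subseteq V$ then any $f\in E_A$, vanishing on $A$, vanishes on $cone(A)$ by homogeneity and on $V$ by bounded weak* continuity, hence on $B$, so $f\in E_B$. For the converse I would argue by contraposition. If $B\not\subseteq V$, pick $x_{0}^{\ast}\in B\setminus V$; as $0\in V$ we have $x_{0}^{\ast}\neq0$, and $x_{0}^{\ast}\in U$. Theorem \ref{A} yields $f\in\operatorname*{FBL}\nolimits^{(\infty)}[E]^{+}$ with $f(x_{0}^{\ast})\neq0$ vanishing off $cone(U)$; as $U$ is a cone disjoint from $V$ and $A\subseteq V$, this forces $f=0$ on $A$, so $f\in E_A\setminus E_B$ and $E_A\not\subseteq E_B$.

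For (ii), the heart of the matter, I would describe $E_A^{d}$ through supports. By the pointwise disjointness remark, $g\in E_A^{d}$ iff $\operatorname{supp}g$ is disjoint from $\operatorname{supp}f$ for every $f\in E_A$, i.e. iff $g$ vanishes on $Z:=\bigcup_{f\in E_A}\operatorname{supp}f$. It then suffices to identify $Z=U$. The inclusion $Z\subseteq U$ is clear, since each $f\in E_A=E_V$ vanishes on $V$ and hence $\operatorname{supp}f\subseteq U$. For $U\subseteq Z$ I would invoke Theorem \ref{A} once more: given $x_{0}^{\ast}\in U$ (necessarily nonzero) it produces $f\in E_A$ with $f(x_{0}^{\ast})\neq0$, so $x_{0}^{\ast}\in Z$. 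Hence $g\in E_A^{d}$ iff $g=0$ on $U$, which is exactly $g\in E_{E^{\ast}\setminus V}$.

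Finally (iii) follows by applying (ii) twice. Writing $E_A^{dd}=(E_U)^{d}$ and invoking (ii) with $A$ replaced by the cone $U$ gives $E_A^{dd}=E_{E^{\ast}\setminus\overline{cone(U)}^{bw^{\ast}}}$; since $U$ is a cone, $\overline{cone(U)}^{bw^{\ast}}=\overline{U}^{bw^{\ast}}=\overline{E^{\ast}\setminus V}^{bw^{\ast}}$, and the elementary topological identity $E^{\ast}\setminus\overline{E^{\ast}\setminus V}=\operatorname{int}(V)$ identifies the complement with the bounded weak* interior of $V$, as required. The step I expect to be the main obstacle is the equality $Z=U$ in (ii): the inclusion $U\subseteq Z$ is precisely where Theorem \ref{A} is indispensable, and the whole argument hinges on keeping the cone and origin bookkeeping straight so that ``vanishing off $cone(U)$'' translates cleanly into membership in $E_A=E_V$.
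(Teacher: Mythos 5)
Your proof is correct and follows essentially the same route as the paper: both rely on the remark $E_{A}=E_{\overline{cone(A)}^{bw^{\ast}}}$, on Theorem \ref{A} applied to the bounded weak* open complement $U=E^{\ast}\setminus\overline{cone(A)}^{bw^{\ast}}$, on the pointwise nature of disjointness in $\operatorname*{FBL}^{(\infty)}[E]$, and on the identity $X\setminus\overline{Y}=\operatorname{int}(X\setminus Y)$ for part (iii). Your reformulation of (ii) via the support set $Z=\bigcup_{f\in E_{A}}\operatorname{supp}f$ is only a cosmetic repackaging of the paper's two-inclusion argument.
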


\begin{proof}
(i) If $B\subseteq\overline{cone\left(  A\right)  }^{bw^{\ast}}$ then
$E_{A}\subseteq E_{B}$ by the above remark For the converse assume that $B$ is
not contained in $\overline{cone\left(  A\right)  }^{bw^{\ast}}$ and pick a
nonzero element $x_{0}^{\ast}$ in $B\diagdown\overline{cone\left(  A\right)
}^{bw^{\ast}}.$ As $\overline{cone\left(  A\right)  }^{bw^{\ast}}$ is a cone
$x_{0}^{\ast}\notin cone\left(  B\right)  .$ Applying Lemma \ref{A} to
$x_{0}^{\ast}$ and $U=E^{\ast}\diagdown\overline{cone\left(  A\right)
}^{bw^{\ast}}$ which is open to get a function $f\in\operatorname*{FBL}%
\nolimits^{\left(  \infty\right)  }\left[  E\right]  $ satisfying $f\left(
x_{0}^{\ast}\right)  \neq0$ and $f$ vanishes outside $cone\left(  U\right)
=U.$ In particular $f\in E_{A}\diagdown E_{B}.$ This proves (i).

(ii) Let $g\in E_{E^{\ast}\diagdown\overline{cone\left(  A\right)  }%
^{bw^{\ast}}}$ and $f\in E_{A}=E_{\overline{cone\left(  A\right)  }^{bw^{\ast
}}}.$ Then $\left\vert f\right\vert \wedge\left\vert g\right\vert $ vanishes
on $A$ as it is dominated by $\left\vert f\right\vert $, and vanishes on
$E^{\ast}\diagdown\overline{cone\left(  A\right)  }^{bw^{\ast}}$ as it
dominated by $\left\vert g\right\vert .$ Hence $\left\vert f\right\vert
\wedge\left\vert g\right\vert =0.$ This proves the inclusion $E_{E^{\ast
}\diagdown\overline{cone\left(  A\right)  }^{bw^{\ast}}}\subseteq E_{A}^{d}.$
Conversely let $g\in E_{A}^{d}$ and let $x_{0}^{\ast}\in E^{\ast}%
\diagdown\overline{cone\left(  A\right)  }^{bw^{\ast}}=U.$ Take now $f$ as in
(i). Then $0=\left(  \left\vert g\right\vert \wedge\left\vert f\right\vert
\right)  \left(  x_{0}^{\ast}\right)  $ and we get $g\left(  x_{0}^{\ast
}\right)  =0.$ This proves the second inclusion.

(iii) By (i) and (ii) we have%
\begin{align*}
E_{A}^{dd}  &  =\left(  E_{E^{\ast}\diagdown\overline{\left(  cone\left(
A\right)  \right)  }^{bw^{\ast}}}\right)  ^{d}=E_{E^{\ast}\diagdown
\overline{E^{\ast}\diagdown\overline{\left(  cone\left(  A\right)  \right)
}^{bw^{\ast}}}^{bw^{\ast}}}\\
&  =E_{int\left(  \overline{\left(  cone\left(  A\right)  \right)  }%
^{bw^{\ast}}\right)  },.
\end{align*}
where in the last equality we use the fact $X\diagdown\overline{Y}=int\left(
X\diagdown Y\right)  $ where $Y$ is a subset of a topological space $X.$
\end{proof}

It was shown in \cite{L-925} that if $F$ is a closed subspace of $E$ then
$\operatorname*{FBL}\nolimits^{(p)}\left[  F\right]  $ can be viewed as a
regular subspace of $\operatorname*{FBL}\nolimits^{(p)}\left[  E\right]  .$
More precisely Theorem 3.4 in \cite{L-925} shows that $\overline{\iota
}:\operatorname*{FBL}\nolimits^{(p)}[F]\rightarrow\operatorname*{FBL}%
\nolimits^{(p)}[E]$ \textit{is order continuous, where }$\iota
:F\longrightarrow E$ is the canonical injection. As the proof given in
\cite{L-925} uses the order denseness of $\operatorname*{FVL}\left[  E\right]
$ in $\operatorname*{FBL}\nolimits^{(p)}\left[  E\right]  $ it is not valid.
We are only able to show that its validity for $p=\infty$. In fact we will
show a more general result in this case. Before giving the proof we need some
preparation concerning bounded weak* topology. In \cite[Theorem 2]{a-2643}
Dieudonn\'{e} showed that if $F$ is a closed subspace of a Banach space $E$
and $\iota:F\hookrightarrow E$ is the natural embedding then $\iota^{\ast
}:E^{\ast}\longrightarrow F^{\ast}$ is open for the bounded weak* topology.
This will be crucial in the next result.

Let $S:F\longrightarrow G$ be an isomorphism between two Banach spaces,
$x_{0}^{\ast}\in G^{\ast}$ and $\left(  x_{n}\right)  $ a sequence in $G$ that
converges to $0.$ Then it is easily checked that $S^{\ast}\left(  B\left(
x_{0}^{\ast},\left(  x_{n}\right)  \right)  \right)  =B\left(  S^{\ast}%
x_{0}^{\ast},\left(  S^{-1}x_{n}\right)  \right)  .$ In particular $S^{\ast}$
is bounded weak* to bounded weak* open. This allows us to present a slightly
more general result than Dieudonn\'{e}'s.

\begin{lemma}
\label{DI}Let $F$ and $E$ two Banach spaces and $T:F\longrightarrow E$ an
embedding operator. Then $T^{\ast}:E^{\ast}\longrightarrow F^{\ast}$ is open
for the bounded weak* topology.
\end{lemma}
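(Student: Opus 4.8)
The plan is to factor the embedding $T$ through its range and reduce everything to the two openness facts already recorded above: Dieudonn\'{e}'s theorem \cite[Theorem 2]{a-2643} for the inclusion of a closed subspace, and the remark that the adjoint of an isomorphism is bounded weak* to bounded weak* open. Since the composition of open maps is open, assembling these two pieces will immediately yield the claim.

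Concretely, I would write $G=T(F)$ for the range of $T$. Because $T$ is an embedding, it is bounded below, so $G$ is a closed subspace of $E$ and the corestriction $S:F\longrightarrow G$, $Sx=Tx$, is an isomorphism of Banach spaces. Denoting by $\iota:G\hookrightarrow E$ the canonical inclusion, we obtain the factorization $T=\iota\circ S$, and passing to adjoints gives
\[
T^{\ast}=S^{\ast}\circ\iota^{\ast}:E^{\ast}\longrightarrow G^{\ast}\longrightarrow F^{\ast}.
\]

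I would then argue openness one factor at a time. By Dieudonn\'{e}'s theorem, $\iota^{\ast}:E^{\ast}\longrightarrow G^{\ast}$ is open for the bounded weak* topology, since $G$ is a closed subspace of $E$. By the remark preceding this lemma, the isomorphism $S$ yields a map $S^{\ast}:G^{\ast}\longrightarrow F^{\ast}$ that is likewise bounded weak* to bounded weak* open. As the composition of open maps between topological spaces is again open, $T^{\ast}=S^{\ast}\circ\iota^{\ast}$ is open for the bounded weak* topology, which is exactly the assertion of Lemma \ref{DI}.

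There is no serious obstacle here; the only point requiring care is to read \emph{embedding operator} as an isomorphism onto its image, so that the range $G$ is genuinely closed and $S$ is genuinely an isomorphism. This is precisely what is needed to invoke Dieudonn\'{e}'s theorem for $\iota$ and the isomorphism remark for $S$; without closedness of the range neither ingredient would apply.
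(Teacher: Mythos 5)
Your proof is correct and follows exactly the paper's argument: factor $T=\iota\circ S$ through the (closed) range, apply Dieudonn\'{e}'s theorem to $\iota^{\ast}$ and the preceding remark on adjoints of isomorphisms to $S^{\ast}$, and compose. The paper's own proof is a two-line sketch of precisely this decomposition, so you have simply supplied the details it leaves implicit.
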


\begin{proof}
To see this write $T=i\circ S$ where $S$ is $T$ viewed as a map from $F$ to
$T\left(  F\right)  $ and $i:T\left(  F\right)  \longrightarrow E$ is the
canonical injection. Then use the above remark and Dieudonn\'{e}'s result.
\end{proof}

\begin{theorem}
Let $E$ and $F$ be two Banach spaces and $T:F\longrightarrow E$ a linear
bounded operator.

\begin{enumerate}
\item If $T$ is injective with closed range then $\overline{T}%
:\operatorname*{FBL}\nolimits^{(\infty)}[F]\rightarrow\operatorname*{FBL}%
\nolimits^{(\infty)}[E]$ \textit{is order continuous.}

\item If $\overline{T}$ is order continuous then $T$ is injective.
\end{enumerate}
\end{theorem}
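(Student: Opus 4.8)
The theorem has two parts, and they rely on the machinery developed earlier (Theorem \ref{A}, Proposition \ref{C}, and Lemma \ref{DI}). The plan is to attack each part separately, using the characterization of $\overline{T}$ as the composition $f\mapsto f\circ T^{\ast}$ together with the openness of $T^{\ast}$ for the bounded weak* topology.

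For part (1), the hypothesis that $T$ is injective with closed range means precisely that $T$ is an embedding, so Lemma \ref{DI} applies: $T^{\ast}:E^{\ast}\to F^{\ast}$ is open for the bounded weak* topology, and since $T$ is injective, $T^{\ast}$ has weak* dense range, which by Banach--Dieudonn\'e (Theorem \ref{BD}) is the same as bounded weak* dense range. To prove order continuity of $\overline{T}$, I would take a net $f_{\alpha}\downarrow 0$ in $\operatorname{FBL}\nolimits^{(\infty)}[F]$ and show $\overline{T}f_{\alpha}=f_{\alpha}\circ T^{\ast}\downarrow 0$ in $\operatorname{FBL}\nolimits^{(\infty)}[E]$. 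The natural strategy is to argue by contradiction: suppose some $h\in\operatorname{FBL}\nolimits^{(\infty)}[E]^{+}$ satisfies $0\le h\le f_{\alpha}\circ T^{\ast}$ for all $\alpha$ but $h\neq 0$. Then $h(x_{0}^{\ast})\ge\varepsilon>0$ on some nonempty bounded weak* open set $U$, by continuity of $h$ (the basic Lemma on $bw^{\ast}$-continuity). Applying Corollary \ref{B} to $U=T^{\ast-1}(V)$ for a suitable bounded weak* open $V\subseteq F^{\ast}$—here is where openness of $T^{\ast}$ enters, guaranteeing such a $V$ exists—one manufactures a fixed nonzero $g\in\operatorname{FBL}\nolimits^{(\infty)}[F]^{+}$ with $g\le f_{\alpha}$ for every $\alpha$, contradicting $f_{\alpha}\downarrow 0$. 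The delicate point is transporting the lower bound $h$ on $E^{\ast}$ back to a fixed lower bound $g$ on $F^{\ast}$; the openness of $T^{\ast}$ is exactly the tool that lets one pull back the bounded weak* open set on which $h$ is bounded below.

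For part (2), I would prove the contrapositive: if $T$ is not injective, then $\overline{T}$ fails to be order continuous. If $\ker T\neq\{0\}$, pick $0\neq y\in\ker T$; then $T^{\ast}(E^{\ast})\subseteq\{x^{\ast}\in F^{\ast}:x^{\ast}(y)=0\}=:N$, a proper bounded weak* closed hyperplane. The set $N$ is nowhere dense, so its complement contains a nonempty bounded weak* open set $W$. Using the construction behind Theorem \ref{A}/Corollary \ref{B}, build a decreasing sequence $f_{n}\downarrow f$ in $\operatorname{FBL}\nolimits^{(\infty)}[F]$ with $f$ supported (as a positive nonzero function) inside $\operatorname{cone}(W)$, hence $f$ vanishes on $\overline{\operatorname{cone}(T^{\ast}(E^{\ast}))}^{bw^{\ast}}\subseteq\operatorname{cone}(N)$. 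Then $\overline{T}f_{n}=f_{n}\circ T^{\ast}$ is eventually $0$ while $f_{n}\downarrow f\neq 0$, so $\overline{T}$ does not preserve this order convergence—witnessing the failure of order continuity. This mirrors the infinite-dimensional construction in the proof of Theorem \ref{Main}, where a decreasing sequence with nonzero infimum is built so that composition with functionals annihilating a missing direction collapses it.

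I expect the main obstacle to be part (1), specifically the pull-back step: one must ensure that the fixed lower bound $g\in\operatorname{FBL}\nolimits^{(\infty)}[F]^{+}$ produced from $h$ via Corollary \ref{B} genuinely satisfies $g\le f_{\alpha}$ uniformly in $\alpha$, and this hinges on verifying $g(x^{\ast})\le h(T^{\ast-1}\ldots)$-type inequalities through the (possibly non-injective on the dual side, but here injective) map $T^{\ast}$. The closed-range hypothesis is essential precisely to invoke Lemma \ref{DI}; without it $T^{\ast}$ need not be bounded weak* open, and the argument breaks. The second part should be comparatively routine given the constructions already established.
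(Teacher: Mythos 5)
Your part (1) is essentially the paper's proof: argue by contradiction, use the bounded weak* openness of $T^{*}$ (Lemma \ref{DI}, which is exactly where injectivity plus closed range enters) to transport the set on which the residual lower bound exceeds $\varepsilon$ from $E^{*}$ to $F^{*}$, and then invoke Corollary \ref{B} to manufacture a fixed nonzero lower bound for all $f_{\alpha}$, contradicting $f_{\alpha}\downarrow 0$. One small correction: the set you feed to Corollary \ref{B} should be the \emph{forward} image $T^{*}(V)\subseteq F^{*}$ of the set $V\subseteq E^{*}$ on which the lower bound is $\geq\varepsilon$ (this forward image is what Lemma \ref{DI} makes bounded weak* open), not a preimage $T^{*-1}(V)$; on $T^{*}(V)$ one has $f_{\alpha}(T^{*}v^{*})=(\overline{T}f_{\alpha})(v^{*})\geq\varepsilon$, which is precisely the hypothesis Corollary \ref{B} requires.

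Part (2) as you propose it has a genuine gap. You build $f_{n}\downarrow f$ with $f\neq 0$ and $\overline{T}f_{n}$ eventually zero, and claim this witnesses failure of order continuity. It does not: since $0\leq f\leq f_{n}$ and $\ker\overline{T}$ is an ideal, $\overline{T}f=0$ as well, so $\overline{T}f_{n}\downarrow 0=\overline{T}f$, and the net $f_{n}-f\downarrow 0$ is sent to $0\downarrow 0$ --- perfectly compatible with order continuity. What your data actually exhibits is that $\overline{T}$ is not injective, which is a different (and here irrelevant) conclusion. To contradict order continuity along these lines you would need the opposite configuration: an \emph{increasing} net inside $\ker\overline{T}$ whose supremum in $\operatorname{FBL}^{(\infty)}[F]$ lies \emph{outside} $\ker\overline{T}$, i.e., you must show that the ideal $\ker\overline{T}$ fails to be a band. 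The paper does exactly this, but runs the argument in the direct direction: order continuity forces $\ker\overline{T}=E_{T^{*}(E^{*})}$ to be a band, hence equal to its double disjoint complement, which Proposition \ref{C}(iii) identifies as $E_{\operatorname{int}\left(\overline{T^{*}(E^{*})}^{bw^{*}}\right)}$; Proposition \ref{C}(i) then forces the weak* closed subspace $\overline{T^{*}(E^{*})}^{w^{*}}=\overline{T^{*}(E^{*})}^{bw^{*}}$ to have nonempty interior, hence to be all of $F^{*}$, so $T$ is injective. Your hyperplane observation ($T^{*}(E^{*})\subseteq\{x^{*}:x^{*}(y)=0\}$ for $0\neq y\in\ker T$) is the correct starting point for the contrapositive, but the missing step is the production of the increasing net (equivalently, the failure of the band property), and that is precisely what the disjoint-complement computation of Proposition \ref{C} packages.
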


\begin{proof}
(i) Assume by way of contradiction that $\overline{T}$ is not order
continuous. So we can find a net $\left(  f_{\alpha}\right)  $ in
$\operatorname*{FBL}\nolimits^{(\infty)}[F]$ such that $f_{\alpha}\downarrow0$
and $\overline{T}f_{\alpha}=f_{\alpha}\circ T^{\ast}\geq g>0$ for some $g$ in
$\operatorname*{FBL}\nolimits^{(\infty)}[E].$ Then there exist a real
$\varepsilon>0$ and a nonempty bounded weak* open set $V$ in $E^{\ast}$ such
that $g\geq\varepsilon$ on $V.$ It follows from Lemma \ref{DI} that
$U=T^{\ast}\left(  V\right)  $ is bounded weak* open in $F^{\ast}$ and in
addition $f_{\alpha}\geq\varepsilon$ on $U.$ Apply Corollary \ref{B} to find
an element $f$ in $\operatorname*{FBL}\nolimits^{(\infty)}[F]$ such that $0<f$
and $f_{\alpha}\geq f$ for all $\alpha\in A.$ This contradiction ends the proof.

(ii) Assume that $\overline{T}$ is order continuous. Then $\ker\overline{T}$
is a band in $\operatorname*{FBL}\nolimits^{(\infty)}[F].$ Using Proposition
\ref{C}.(iii) we get
\[
E_{T^{\ast}\left(  E^{\ast}\right)  }=\left(  \ker\overline{T}\right)
=\left(  \ker\overline{T}\right)  ^{dd}=E_{T^{\ast}\left(  E^{\ast}\right)
}^{dd}=E_{int\left(  \overline{T^{\ast}\left(  E^{\ast}\right)  }^{bw\ast
}\right)  }.
\]
Now according to part (i) of the same proposition we obtain:
\[
T^{\ast}\left(  E^{\ast}\right)  \subseteq\overline{int\left(  \overline
{T^{\ast}\left(  E^{\ast}\right)  }^{bw\ast}\right)  }^{bw^{\ast}}.
\]
In particular the subspace $\overline{T^{\ast}\left(  E^{\ast}\right)
}^{w\ast}=\overline{T^{\ast}\left(  E^{\ast}\right)  }^{bw\ast}$ has a
nonempty interior. Thus $\overline{T^{\ast}\left(  E^{\ast}\right)  }^{w\ast
}=F^{\ast}$ and $T$ is then injective.
\end{proof}

\begin{corollary}
\textit{Let} $F$ \textit{be a closed subspace of} $E$, \textit{and let}
$\iota:$ $F\hookrightarrow E$ \textit{be the inclusion map. Then}
$\operatorname*{FBL}\nolimits^{(\infty)}[F]$ \textit{is a regular} vector
\textit{sublattice of} $\operatorname*{FBL}\nolimits^{(\infty)}[E]$.
\end{corollary}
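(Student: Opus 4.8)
The plan is to obtain the result as a direct specialization of the preceding Theorem to the inclusion operator. Set $T=\iota:F\hookrightarrow E$. Because $F$ is a closed subspace of $E$, the map $\iota$ is injective and its range is exactly $F$, hence closed; so $\iota$ satisfies the hypotheses of both the injectivity Proposition and part (i) of the Theorem proved just above. The corollary will then follow by conjoining those two statements.

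First I would record that $\overline{\iota}:\operatorname*{FBL}\nolimits^{(\infty)}[F]\rightarrow\operatorname*{FBL}\nolimits^{(\infty)}[E]$ is injective. This is precisely the content of the Proposition (injectivity of $T$ is equivalent to injectivity of $\overline{T}$) applied to the injective operator $\iota$. Since $\overline{\iota}$ is by construction a lattice homomorphism and now known to be injective, its image $\overline{\iota}\bigl(\operatorname*{FBL}\nolimits^{(\infty)}[F]\bigr)$ is a vector sublattice of $\operatorname*{FBL}\nolimits^{(\infty)}[E]$ lattice-isomorphic to $\operatorname*{FBL}\nolimits^{(\infty)}[F]$. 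This is exactly what legitimizes viewing $\operatorname*{FBL}\nolimits^{(\infty)}[F]$ as an honest sublattice, rather than merely as a lattice-homomorphic image, of $\operatorname*{FBL}\nolimits^{(\infty)}[E]$.

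Next I would invoke part (i) of the Theorem with $T=\iota$: as $\iota$ is injective with closed range, $\overline{\iota}$ is order continuous. Recalling that a vector sublattice is \emph{regular} precisely when its inclusion map is order continuous---the sense in which regularity is used in the discussion preceding the Theorem---the order continuity of $\overline{\iota}$ says exactly that $\operatorname*{FBL}\nolimits^{(\infty)}[F]$ embeds in $\operatorname*{FBL}\nolimits^{(\infty)}[E]$ as a regular sublattice, which is the assertion.

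I do not expect a genuine obstacle here, since all of the analytic work has already been carried out in establishing the Theorem and the injectivity Proposition, and this corollary is only their combination for the particular operator $\iota$. The single point that warrants an explicit word is the bookkeeping identification of \emph{regular} with \emph{order-continuous embedding}, together with the remark that it is the injectivity of $\overline{\iota}$ that upgrades the lattice-homomorphic image to a genuine sublattice on which the regularity statement is meaningful.
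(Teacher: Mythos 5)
Your proposal is correct and is exactly the argument the paper intends: the corollary is stated without proof precisely because it is the specialization of part (1) of the preceding Theorem (order continuity of $\overline{\iota}$, since $\iota$ is injective with closed range) combined with the injectivity Proposition (so that $\overline{\iota}$ identifies $\operatorname*{FBL}\nolimits^{(\infty)}[F]$ with a genuine sublattice), and order continuity of the embedding is what \emph{regular} means here. No further comment is needed.
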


The last result we deal with is a recent result proved by Laustsen and
Tradacete in \cite{L-968}. However their proof used order denseness of
$\operatorname*{FVL}\left[  E\right]  $ in $\operatorname*{FBL}\nolimits^{(p)}%
\left[  E\right]  $ which is no longer correct. So we provide here a new proof
based on maximal elements introduced in \cite{L-403} for $\operatorname*{FBL}%
\left[  E\right]  $ and studied in \cite{L-979} for general
$\operatorname*{FBL}\nolimits^{(p)}\left[  E\right]  $. The sublattices
$I_{w^{\ast}}\left[  E\right]  $ and $H_{w^{\ast}}^{p}\left[  E\right]  $ of
$H\left[  E\right]  $ were recalled in the preliminary section.

\begin{proposition}
(see \cite[Propositoin 4.5]{L-968}) Let $E=\ell_{1}\left(  A\right)  $ for
some non-empty set $A$ and $p\in\lbrack1,\infty).$ Then the following equality
holds: $\overline{I_{w^{\ast}}\left[  \ell_{1}\left(  A\right)  \right]
}=H_{w^{\ast}}^{p}\left[  \ell_{1}\left(  A\right)  \right]  .$
\end{proposition}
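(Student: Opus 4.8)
The plan is to prove the two inclusions separately. The inclusion $\overline{I_{w^\ast}[\ell_1(A)]}\subseteq H_{w^\ast}^p[\ell_1(A)]$ is the routine one. Every element of $I[\ell_1(A)]$ is dominated in modulus by a finite lattice combination $C\bigvee_i|\delta_{x_i}|$, which lies in $\operatorname{FVL}[\ell_1(A)]\subseteq H^p[\ell_1(A)]$; since the $p$-norm is monotone, this gives $I[\ell_1(A)]\subseteq H^p[\ell_1(A)]$, and hence $I_{w^\ast}[\ell_1(A)]\subseteq H_{w^\ast}^p[\ell_1(A)]$. Testing the $p$-norm against a single admissible functional shows that it dominates the supremum over $B_{E^\ast}$, so $p$-norm convergence forces uniform convergence on $B_{E^\ast}$ and thereby preserves weak$^\ast$ continuity on the ball; thus $H_{w^\ast}^p[\ell_1(A)]$ is $p$-norm closed and the inclusion passes to the closure.

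For the reverse inclusion $H_{w^\ast}^p[\ell_1(A)]\subseteq\overline{I_{w^\ast}[\ell_1(A)]}$, I would first reduce to a positive $f$ with $\|f\|_{\operatorname{FBL}^{(p)}}\le1$ by writing $f=f^+-f^-$ and rescaling. The key structural input---this is where the maximal elements of \cite{L-403,L-979} enter---is that for $E=\ell_1(A)$ every such $f$ sits below a decomposable maximal element $m(x^\ast)=\big(\sum_{a\in A}w_a|x^\ast_a|^p\big)^{1/p}$ with $\sum_{a\in A}w_a<\infty$. The point is that the admissibility condition for $\ell_1(A)$ decouples over coordinates, $\sum_k|(x_k^\ast)_a|^p\le1$ for each $a$, which is exactly why such an $m$ has $\|m\|_{\operatorname{FBL}^{(p)}}^p=\sum_a w_a$ and is the correct ceiling. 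Granting the domination $f\le m$, I would approximate $f$ from below by the truncations $f\wedge m_F$, where $m_F(x^\ast)=\big(\sum_{a\in F}w_a|x^\ast_a|^p\big)^{1/p}$ and $F$ runs over the finite subsets of $A$. Each $m_F$ is bounded by $(\sum_{a\in F}w_a)^{1/p}\bigvee_{a\in F}|\delta_{e_a}|$ and is weak$^\ast$ continuous, so $f\wedge m_F$ is weak$^\ast$ continuous and dominated by an element of $I[\ell_1(A)]$, whence $f\wedge m_F\in I_{w^\ast}[\ell_1(A)]$.

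It then remains to control the error. Since $0\le f\le m$ we have $0\le f-f\wedge m_F=(f-m_F)^+\le m-m_F$, and using $(a-b)^p\le a^p-b^p$ for $a\ge b\ge0$ together with the coordinatewise decoupling,
\[
\|m-m_F\|_{\operatorname{FBL}^{(p)}}^{p}\le\sup_{(x_k^\ast)}\sum_k\big(m(x_k^\ast)^p-m_F(x_k^\ast)^p\big)=\sup_{(x_k^\ast)}\sum_{a\notin F}w_a\sum_k|(x_k^\ast)_a|^p\le\sum_{a\notin F}w_a,
\]
which tends to $0$ as $F\uparrow A$ since $\sum_a w_a<\infty$. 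Hence $\|f-f\wedge m_F\|_{\operatorname{FBL}^{(p)}}\to0$ and $f\in\overline{I_{w^\ast}[\ell_1(A)]}$, completing the argument.

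I expect the genuine obstacle to be the domination step, namely producing the summable decomposable ceiling $m$ above (with weight-sum controlled by $\|f\|_{\operatorname{FBL}^{(p)}}$). Unlike the case $p=\infty$---where weak$^\ast$ continuity on the compact ball already yields uniform finite-coordinate approximation by Stone--Weierstrass---for $p<\infty$ a sup-norm-small finite-coordinate perturbation need \emph{not} be $p$-norm small, because $\sum_a(\max_a|\cdot|)^p$ registers all infinitely many coordinates at once. This is precisely the place where the original argument appealed to order denseness, and replacing it requires the maximal-element machinery of \cite{L-979} to guarantee that positive elements of the unit ball of $\operatorname{FBL}^{(p)}[\ell_1(A)]$ lie below such a ceiling (or at least cofinally below a family of them, in which case the same tail estimate still closes the argument along that family).
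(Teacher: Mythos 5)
Your proposal is correct, and it pivots on exactly the same key input as the paper: for $E=\ell_1(A)$, every positive $f\in H^p_{w^*}[\ell_1(A)]$ of norm at most one is dominated by a maximal element, and maximal elements have the decomposable form $g_\varphi(x^\ast)=\bigl(\sum_a \varphi_a|x^\ast_a|^p\bigr)^{1/p}$ with $\varphi\in\ell_1(A)_+$ --- precisely the lemmas of \cite{L-403} and \cite{L-979} that the paper cites, and which you rightly identify as the step that cannot be done by hand. Where you diverge is in how you conclude from $f\le g_\varphi$: the paper invokes one further cited fact, namely $g_\varphi\in\operatorname{FBL}^{(p)}[\ell_1(A)]\subseteq\overline{I_{w^\ast}[\ell_1(A)]}$, and then uses that the closure of an ideal is an ideal; you instead give a self-contained finish via the finite-coordinate truncations $f\wedge m_F$, with the tail estimate $\|m-m_F\|^p\le\sum_{a\notin F}w_a$ obtained from $(a-b)^p\le a^p-b^p$ and the coordinatewise admissibility test at the basis vectors $e_a$. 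Your computation is sound (monotonicity of the norm gives $\|f-f\wedge m_F\|\le\|m-m_F\|$, and each $f\wedge m_F$ lies in $I_{w^\ast}$ since it is dominated by a multiple of $\bigvee_{a\in F}|\delta_{e_a}|$ and is weak$^\ast$ continuous on the ball), and it buys a more elementary, explicit argument that avoids the extra lemma asserting $g_\varphi\in\operatorname{FBL}^{(p)}$, at the cost of a longer estimate. One small imprecision: in your closing paragraph you ask for the domination ceiling over positive elements of the unit ball of $\operatorname{FBL}^{(p)}[\ell_1(A)]$, whereas the proposition requires it for all positive elements of $H^p_{w^\ast}[\ell_1(A)]$, a possibly larger set; the cited maximal-element lemma does hold at that level of generality, and your main text applies it correctly, so this does not affect the argument.
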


\begin{proof}
The inclusion $I_{w^{\ast}}\left[  E\right]  \subseteq H_{w^{\ast}}^{p}\left[
E\right]  $ was proved in \cite{L-968} and is easy. Conversely let $0\leq f\in
H_{w^{\ast}}^{p}\left[  E\right]  .$ We know by \cite[Lemma 4.4]{L-403} and
\cite[Lemma 4.3]{L-979} that there exists a maximal element $g\in H^{p}$ such
that $g\geq f$. According to \cite[Lemmas 4.7 and 4.9]{L-403} and \cite[Lemmas
4.6 and 4.8]{L-979} we can write $g=g_{\varphi}$ for some $\varphi$ in
$\ell_{1}\left(  A\right)  .$ Recall that $g_{\varphi}\left(  x^{\ast}\right)
=\left\vert \varphi\left(  \left\vert x^{\ast}\right\vert ^{p}\right)
\right\vert ^{1/p}$ for all $x^{\ast}\in\ell_{\infty}\left(  A\right)
=E^{\ast}.$ Using \cite[Lemma 4.8]{L-403} and \cite[Lemma 4.7]{L-979} we know
that $g_{\varphi}\in\operatorname*{FBL}\nolimits^{(p)}\left[  E\right]
\subseteq\overline{I_{w}\left[  E\right]  }.$ Thus $f\in\overline{I_{w}\left[
E\right]  }$ as it was claimed.
\end{proof}

\end{document}